\tikzset{vertex/.style={circle,draw,fill,inner sep=0pt,minimum size=1mm}}
\tikzset{vertex/.style={circle,draw,fill,inner sep=0pt,minimum size=1mm}}
\theoremstyle{plain}
\newtheorem{thm}{Theorem}
\newtheorem{lem}[thm]{Lemma}
\newtheorem{prop}[thm]{Proposition}
\newtheorem{definition}[thm]{Definition}
\newtheorem{remark}[thm]{Remark}
\newtheorem{question}[thm]{Question}
\theoremstyle{definition}
\newtheorem{exl}[thm]{Example}
\numberwithin{thm}{section}
\newcommand{\adj}{\leftrightarrow}
\newcommand{\adjeq}{\leftrightarroweq}
\DeclareMathOperator{\id}{id}
\def\N{{\mathbb N}}
\def\R{{\mathbb R}}
\DeclareMathOperator{\Fix}{Fix}
\newcommand{\Z}{\mathbb{Z}}
\title{Convexity and Freezing Sets in Digital Topology}
\author{Laurence Boxer
\thanks{Department of Computer and Information
Sciences, Niagara University, NY 14109, USA; and
Department of Computer Science and Engineering,
State University of New York at Buffalo. Email: boxer@niagara.edu}
}
\date{}
\begin{document}

\maketitle
\begin{abstract}
    We continue the study of freezing sets in
    digital topology, introduced in~\cite{BxFpSets}.
    We show how to find a minimal freezing set for
    a ``thick" convex disk~$X$ in the digital plane $\Z^2$. We
    give examples showing the significance of the
    assumption that~$X$ is convex.
    
    Key words and phrases: digital topology, freezing set, convex
    
    AMS subject classification: 54H25
\end{abstract}
\section{Introduction}
We often use a digital image as a mathematical
model of an object or a set of objects ``pictured"
by the image. Methods inspired by classical
topology are used to determine whether a digital image
has properties analogous to the topological properties
of a ``real world" object represented by the image.
The literature now contains considerable success
in adapting to digital topology notions from
classical topology such as {\em connectedness,
continuous function, homotopy, fundamental group,
homology, automorphism group}, et al.

However, the fixed point properties of a digital image
are often very different from those of the Euclidean
object modeled by the image. Knowledge of the
fixed point set $\Fix(f)$ of a continuous
self-map on a nontrivial topological space~$X$
rarely tells us much about 
$f|_{X \setminus \Fix(f)}$. By contrast, it
was shown in~\cite{bs19a,BxFpSets} that
knowledge of the fixed point set $\Fix(f)$ of a
digitally continuous self-map on a nontrivial 
digital image~$(X,\kappa)$ may tell us a great
deal about $f|_{X \setminus \Fix(f)}$. Indeed, if
$A$ is a subset of $X$ that is a ``freezing set"
and $A \subset \Fix(f)$, then $f$ is constrained to
be the identity function $\id_X$.

Some results concerning freezing sets were
presented in~\cite{BxFpSets}. In this paper,
we continue the study of freezing sets. In 
particular, we show how to find minimal
freezing sets for ``thick" convex
disks in the digital plane, and we give examples
showing the importance of the assumption of
convexity in our theorems.

\section{Preliminaries}
We use $\Z$ to indicate the set of integers and
$\R$ for the set of real numbers.
For a finite set~$X$, we denote by $\#X$ the number of
distinct members of~$X$.

\subsection{Adjacencies}
Material in this section is quoted or paraphrased
from~\cite{BxFpSets}.

The $c_u$-adjacencies are commonly used 
in digital topology.
Let $x,y \in \Z^n$, $x \neq y$, where we consider these points as $n$-tuples of integers:
\[ x=(x_1,\ldots, x_n),~~~y=(y_1,\ldots,y_n).
\]
Let $u \in \Z$,
$1 \leq u \leq n$. We say $x$ and $y$ are 
{\em $c_u$-adjacent} if
\begin{itemize}
\item there are at most $u$ indices $i$ for which 
      $|x_i - y_i| = 1$, and
\item for all indices $j$ such that $|x_j - y_j| \neq 1$ we
      have $x_j=y_j$.
\end{itemize}
Often, a $c_u$-adjacency is denoted by the number of points
adjacent to a given point in $\Z^n$ using this adjacency.
E.g.,
\begin{itemize}
\item In $\Z^1$, $c_1$-adjacency is 2-adjacency.
\item In $\Z^2$, $c_1$-adjacency is 4-adjacency and
      $c_2$-adjacency is 8-adjacency.
\item In $\Z^3$, $c_1$-adjacency is 6-adjacency,
      $c_2$-adjacency is 18-adjacency, and $c_3$-adjacency
      is 26-adjacency.
\end{itemize}

For $\kappa$-adjacent $x,y$, we write $x \adj_{\kappa} y$ or $x \adj y$ when $\kappa$ is understood.
We write $x \adjeq_{\kappa} y$ or $x \adjeq y$ to mean that either $x \adj_{\kappa} y$ or $x = y$.

We say $\{x_n\}_{n=0}^k \subset (X,\kappa)$ is a {\em $\kappa$-path} (or a {\em path} if $\kappa$ is understood)
from $x_0$ to $x_k$ if $x_i \adjeq_{\kappa} x_{i+1}$ for $i \in \{0,\ldots,k-1\}$, and $k$ is the {\em length} of the path.

A subset $Y$ of a digital image $(X,\kappa)$ is
{\em $\kappa$-connected}~\cite{Rosenfeld},
or {\em connected} when $\kappa$
is understood, if for every pair of points $a,b \in Y$ there
exists a $\kappa$-path in $Y$ from $a$ to $b$.

We define
\[ N(X,\kappa, x) = \{ y \in X \, | \, x \adj_{\kappa} y\}.
\]

\begin{definition}
\label{bdDef}
Let $X \subset \Z^n$.
\begin{itemize}
    \item The
{\em boundary of} $X$
{\rm \cite{RosenfeldMAA}} is
\[Bd(X) = \{x \in X \, | \mbox{ there exists } y \in \Z^n \setminus X \mbox{ such that } y \adj_{c_1} x\}.
\]
\item The {\em interior of} $X$
is $Int(X) = X \setminus Bd(X)$.
\end{itemize}
\end{definition}

\subsection{Digitally continuous functions}
Material in this section is quoted or paraphrased
from~\cite{BxFpSets}.

The following generalizes a definition of~\cite{Rosenfeld}.

\begin{definition}
\label{continuous}
{\rm ~\cite{Bx99}}
Let $(X,\kappa)$ and $(Y,\lambda)$ be digital images. 
A function $f: X \rightarrow Y$ is 
{\em $(\kappa,\lambda)$-continuous} if for
every $\kappa$-connected $A \subset X$ we have that
$f(A)$ is a $\lambda$-connected subset of $Y$.
If $(X,\kappa)=(Y,\lambda)$, we say such a function is {\em $\kappa$-continuous},
denoted $f \in C(X,\kappa)$.
$\Box$
\end{definition}

When the adjacency relations are understood, we may simply say that 
$f$ is \emph{continuous}. Continuity can be expressed in terms of 
adjacency of points:
\begin{thm}
{\rm ~\cite{Rosenfeld,Bx99}}
A function $f: (X,\kappa) \to (Y,\lambda)$ is continuous if and only if $x \adj_{\kappa} x'$ in $X$ implies 
$f(x) \adjeq_{\lambda} f(x')$.
\end{thm}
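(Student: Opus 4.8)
The plan is to prove both directions of the equivalence directly from the definition of $(\kappa,\lambda)$-continuity, exploiting the observation that a set of at most two points is $\lambda$-connected precisely when its points are related by $\adjeq_\lambda$.

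For the forward implication, I would assume $f$ is continuous and take any $\kappa$-adjacent pair $x \adj_{\kappa} x'$. The set $A = \{x, x'\}$ is $\kappa$-connected, since $x, x'$ already constitute a $\kappa$-path of length~$1$. Continuity then forces $f(A) = \{f(x), f(x')\}$ to be a $\lambda$-connected subset of $Y$, and a subset of at most two points is $\lambda$-connected exactly when those points are joined by a $\lambda$-path, i.e.\ when $f(x) \adjeq_{\lambda} f(x')$. This yields the required pointwise condition.

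For the converse, I would assume the pointwise condition and let $A \subset X$ be an arbitrary $\kappa$-connected set; the goal is to show $f(A)$ is $\lambda$-connected. I would pick $u, v \in f(A)$, write $u = f(a)$ and $v = f(b)$ with $a, b \in A$, and invoke connectedness of $A$ to obtain a $\kappa$-path $a = x_0, x_1, \ldots, x_k = b$ lying in $A$. The key step is to transport this path through $f$: for each consecutive pair, either $x_i = x_{i+1}$, whence $f(x_i) = f(x_{i+1})$, or $x_i \adj_{\kappa} x_{i+1}$, whence the hypothesis gives $f(x_i) \adjeq_{\lambda} f(x_{i+1})$. In either case $f(x_i) \adjeq_{\lambda} f(x_{i+1})$, so $f(x_0), \ldots, f(x_k)$ is a $\lambda$-path in $f(A)$ from $u$ to $v$, establishing $\lambda$-connectedness.

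The argument is largely routine; the only point demanding care is the role of the reflexive closure $\adjeq_{\lambda}$ built into the definition of a path. Because a path is defined using $\adjeq$ rather than strict adjacency, the degenerate case $x_i = x_{i+1}$ must be admitted and verified separately, and symmetrically the two-point set in the forward direction may collapse to a singleton when $f(x) = f(x')$. In both situations the needed relation holds by reflexivity of $\adjeq_{\lambda}$, so no genuine obstacle arises, and the equivalence follows.
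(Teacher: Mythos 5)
Your proof is correct. Note that the paper does not prove this statement at all---it is quoted as a known result from the cited references of Rosenfeld and Boxer---so there is no in-paper argument to compare against; your two-direction argument (a $\kappa$-adjacent pair forms a two-point connected set whose image must be connected, and conversely a $\kappa$-path pushes forward to a $\lambda$-path term by term) is the standard proof, and you correctly handle the degenerate cases where $x_i = x_{i+1}$ or $f(x) = f(x')$ via the reflexive relation $\adjeq_{\lambda}$.
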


Similar notions are referred to as {\em immersions}, 
{\em gradually varied operators}, and {\em gradually varied mappings}
in~\cite{Chen94,Chen04}.

Composition preserves continuity, in the sense of the following.

\begin{thm}
{\rm \cite{Bx99}}
\label{composition}
Let $(X,\kappa)$, $(Y,\lambda)$, and $(Z,\mu)$ be digital images.
Let $f: X \to Y$ be $(\kappa,\lambda)$-continuous and let
$g: Y \to Z$ be $(\lambda,\mu)$-continuous. Then
$g \circ f: X \to Z$ is $(\kappa,\mu)$-continuous.
\end{thm}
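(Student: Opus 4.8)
The plan is to exploit Definition~\ref{continuous} directly, since continuity there is phrased precisely as preservation of connectedness, a property that composes for free. First I would fix an arbitrary $\kappa$-connected subset $A \subset X$; the goal is then to show that $(g \circ f)(A)$ is a $\mu$-connected subset of $Z$. Because $f$ is $(\kappa,\lambda)$-continuous, applying the definition to $A$ yields that $f(A)$ is a $\lambda$-connected subset of $Y$. Treating $f(A)$ as the relevant connected subset of $Y$ and invoking the $(\lambda,\mu)$-continuity of $g$, I would conclude that $g(f(A))$ is $\mu$-connected. The only bookkeeping step is the elementary set identity $g(f(A)) = (g \circ f)(A)$. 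Since $A$ was an arbitrary $\kappa$-connected subset of $X$, this establishes that $g \circ f$ is $(\kappa,\mu)$-continuous.

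A second route I would keep in reserve is the pointwise characterization stated immediately after Definition~\ref{continuous}: take $x \adj_{\kappa} x'$ in $X$ and chase the adjacency through both maps. Continuity of $f$ gives $f(x) \adjeq_{\lambda} f(x')$, after which a small case split is forced by the $\adjeq$ relation. If $f(x) = f(x')$, then $(g \circ f)(x) = (g \circ f)(x')$ and the desired conclusion $(g \circ f)(x) \adjeq_{\mu} (g \circ f)(x')$ holds trivially; if instead $f(x) \adj_{\lambda} f(x')$, then the characterization applied to $g$ gives $g(f(x)) \adjeq_{\mu} g(f(x'))$. Either way $(g \circ f)(x) \adjeq_{\mu} (g \circ f)(x')$, which is continuity of the composite.

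I expect the connectedness argument to be the cleaner of the two, since it avoids the case analysis and simply mirrors the categorical fact that a composite of morphisms is a morphism. There is no real obstacle here; the statement is essentially the functoriality of digital continuity. The only points that demand any care are keeping the quantifier ``for every connected subset'' explicit and not conflating the image of a single point with the image of a subset, together with recording the trivial identity that $g \circ f$ applied to a set equals $g$ applied to $f$ of that set.
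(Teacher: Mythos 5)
Your proposal is correct. The paper states this theorem as a result quoted from~\cite{Bx99} and gives no proof of its own, so there is nothing to compare against; both of your arguments are valid, and the first one (fix a $\kappa$-connected $A \subset X$, apply $(\kappa,\lambda)$-continuity of $f$ to get $f(A)$ $\lambda$-connected, apply $(\lambda,\mu)$-continuity of $g$ to get $g(f(A)) = (g \circ f)(A)$ $\mu$-connected) is the direct application of Definition~\ref{continuous} and is the standard proof.
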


Given $X = \Pi_{i=1}^v X_i$, we denote throughout this paper the projection
onto the $i^{th}$ factor by $p_i$; i.e., $p_i: X \to X_i$ is defined by
$p_i(x_1,\ldots,x_v) = x_i$, where $x_j \in X_j$.

Given a function $f: X \to X$, we say $x \in X$ is
a {\em fixed point of} $f$ if $f(x)=x$. The set of
points $\{x \in X \, | \, f(x)=x\}$ we denote as
$\Fix(f)$.

We use the notation $\id_X$ to denote the
{\em identity function}: $\id_X: X \to X$ is
the function $\id_X(x) = x$ for all $x \in X$.

\begin{definition}
{\rm \cite{BxFpSets}}
\label{freezeDef}
Let $(X,\kappa)$ be a digital image. We say
$A \subset X$ is a {\em freezing set for $X$}
if given $f \in C(X,\kappa)$,
$A \subset \Fix(f)$ implies $f=\id_X$.
\end{definition}

\subsection{Digital disks}
Let $\kappa \in \{c_1,c_2\}$. We say a $\kappa$-connected 
set $S=\{x_i\}_{i=1}^n \subset \Z^2$ is a
{\em (digital) line segment} if the members of $S$ are collinear.

\begin{remark}
A digital line segment must be vertical, horizontal, or have
slope of $\pm 1$. We say a segment with slope of $\pm 1$ is
{\em slanted}.
\end{remark}

A {\em (digital) $\kappa$-closed curve} is a
path $S=\{s_i\}_{i=0}^{m-1}$ such that $s_0=s_{m-1}$,
and $|i - j| < m-1$ 
implies $s_i \neq s_j$. If 
$s_i \adj_{\kappa} s_j$ implies 
$|i - j| \mod m = 1$, $S$ is a {\em (digital) 
$\kappa$-simple closed curve}.
For a simple closed curve $S \subset \Z^2$ we generally assume
\begin{itemize}
    \item $m \ge 8$ if $\kappa = c_1$, and
    \item $m \ge 4$ if $\kappa = c_2$.
\end{itemize}
These requirements are necessary for the Jordan Curve
Theorem of digital topology, below, as a
$c_1$-simple closed curve in $\Z^2$ needs at least 8 points to
have a nonempty finite complementary $c_2$-component,
and a $c_2$-simple closed curve in $\Z^2$ needs at least 4 points to
have a nonempty finite complementary $c_1$-component.
Examples in~\cite{RosenfeldMAA} show why it is
desirable to consider $S$ and $\Z^2 \setminus S$
with different adjacencies.

\begin{thm}
{\rm \cite{RosenfeldMAA}}
{\em (Jordan Curve Theorem for digital topology)}
Let $\{\kappa, \kappa'\} = \{c_1, c_2\}$.
Let $S \subset \Z^2$ be a simple closed 
$\kappa$-curve such that $S$ has at least 8 points if
$\kappa = c_1$ and such that $S$ has at least 
4 points if $\kappa = c_2$. Then
$\Z^2 \setminus S$ has exactly 2 $\kappa'$-connected
components.
\end{thm}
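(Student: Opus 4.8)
The plan is to prove the two halves of the conclusion separately — that $\Z^2 \setminus S$ has \emph{at least} two $\kappa'$-components and \emph{at most} two — and to organize both around a crossing-parity function. For each $p = (p_1,p_2) \in \Z^2 \setminus S$, I would define an integer $c(p)$ counting the crossings of $S$ by the horizontal ray $\{(t,p_2): t > p_1\}$. Since a digital curve can run horizontally along such a ray or merely touch the line $y = p_2$ and return to the same side, ``crossing'' must be defined with care: a crossing is counted only when, traversing $S$, the curve passes from strictly below $y=p_2$ to strictly above (or vice versa), and touchings that return to the same side are ignored. The parity $\pi(p) = c(p) \bmod 2$ is then the invariant I would use to label the two sides.

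The technical heart is the invariance lemma: if $p \adjeq_{\kappa'} q$ with $p,q \in \Z^2 \setminus S$, then $c(p) \equiv c(q) \pmod 2$. I expect this to be the main obstacle. One must check, by cases on the relative positions of $p$, $q$, and the arcs of $S$ lying between their two rays, that a single $\kappa'$-step alters the crossing count by an even number. This is precisely where the hypothesis $\{\kappa,\kappa'\} = \{c_1,c_2\}$ is indispensable: pairing a $c_1$-curve with a $c_2$-complement (and conversely) is exactly what prevents a diagonal step from slipping through a gap in $S$ and flipping the parity, and the minimum-size requirements ($8$ points for $c_1$, $4$ for $c_2$) are the same constraints that rule out degenerate curves with leaks.

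Granting the lemma, $\pi$ is constant on each $\kappa'$-component, so it descends to a $2$-valued function on components. For \emph{at least two} components I would exhibit points of each parity. Any $p$ whose first coordinate exceeds every first coordinate appearing in $S$ satisfies $c(p)=0$, so even parity (``outside'') occurs. The size hypotheses guarantee that $S$ encloses a lattice point; concretely, scanning a horizontal line meeting $S$ and choosing a point strictly between an entering and an exiting crossing produces a $p$ with $c(p)$ odd. Hence both parities are realized and there are at least two $\kappa'$-components.

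For \emph{at most two} components I must show that all even-parity points lie in a single $\kappa'$-component and likewise for odd parity. The unbounded (``outside'') part is readily seen to form one $\kappa'$-component, since outside a large bounding box the complement coincides with a connected set; the remaining task is to $\kappa'$-connect every complementary point to either that outside component or one fixed interior point. I would argue by reachability: from any $p \in \Z^2 \setminus S$, move in the $+x$ direction until either escaping to infinity (reaching the outside) or arriving immediately before a point of $S$, which is then $\kappa'$-adjacent to $S$; a secondary argument, tracing the two local sides of $S$ and exploiting that $S$ is \emph{simple} and closed, shows that complementary points abutting $S$ on a common side are $\kappa'$-connected. This local two-sidedness of a digital simple closed curve, propagated around all of $S$, is the most delicate step of this direction, and to complete it rigorously I would expect to need either an induction on the length of $S$ or a transfer to the classical Jordan Curve Theorem via a polygonal interpolation of $S$ — with the diagonal-crossing care again supplied by the $c_1/c_2$ pairing.
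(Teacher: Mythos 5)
This theorem is not proved in the paper at all: it is imported verbatim from Rosenfeld's 1979 paper \cite{RosenfeldMAA}, so there is no in-paper argument to compare yours against. Judged on its own terms, your proposal is a reasonable high-level plan in the spirit of the classical crossing-parity proofs, but it is an outline rather than a proof, and the two places where all of the difficulty of the digital Jordan Curve Theorem actually lives are exactly the places you leave open. First, the invariance lemma --- that a single $\kappa'$-step between complementary points changes the crossing count by an even number --- is asserted, correctly flagged as ``the main obstacle,'' and then not carried out. The case analysis there is genuinely delicate: one must handle arcs of $S$ that run along the ray, arcs that touch the line $y = p_2$ and retreat, and (for the mixed $c_1$/$c_2$ pairing) diagonal steps of $S$ that meet the ray only at a single lattice point; this is precisely the content that distinguishes the true theorem from the false statement obtained by pairing $c_2$ with $c_2$. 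Without that case analysis the parity function is not shown to be well defined on components, and nothing downstream is established.

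Second, the ``at most two components'' direction is reduced to the claim that all points of a given parity lie in one $\kappa'$-component, for which you offer only a reachability heuristic plus an appeal to ``local two-sidedness of a digital simple closed curve,'' to be completed ``by an induction on the length of $S$ or a transfer to the classical Jordan Curve Theorem via a polygonal interpolation.'' That is a statement of what remains to be proved, not a proof; the propagation of the two local sides around $S$ (and the fact that the simplicity hypothesis prevents the two sides from merging) is the other half of the theorem's real content. A smaller but still genuine gap: you assert that the size hypotheses ($8$ points for $c_1$, $4$ for $c_2$) ``guarantee that $S$ encloses a lattice point,'' i.e., that odd parity is realized; this also requires an argument, since nonemptiness of the bounded component is part of what is being proved. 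In short, the strategy is sound and standard, but the proof as written establishes neither the key invariance lemma nor the connectivity of the parity classes, so it does not yet prove the theorem.
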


One of the $\kappa'$-components of 
$\Z^2 \setminus S$ is finite and the other is infinite. This 
suggests the following.
\begin{definition}
\label{diskDef}
Let $S \subset \Z^2$ be a $c_2$-closed curve such that
$\Z^2 \setminus S$ has two $c_1$-components, one finite and the
other infinite. The union $D$ of $S$ and the finite $c_1$-component 
of $\Z^2 \setminus S$ is a {\em (digital) disk}. $S$ is
a {\em bounding curve} of $D$. The finite component $c_1$-component 
of $\Z^2 \setminus S$ is the {\em interior of} $S$.
\end{definition}

We will consider a given disk $D$ with either the $c_1$ or the
$c_2$ adjacency.
Notes:
\begin{itemize}
    \item If $D$ is a digital disk determined as above by a bounding 
    $c_2$-closed curve $S$, then $(S,c_1)$ can be 
    disconnected. See Figure~\ref{fig:diamond}.
    \item There may be more than one closed curve $S$
          bounding a given disk~$D$. See Figure~\ref{fig:2sccBdry}.
          Since we are interested in finding {\em minimal}
          freezing sets and since it turns out we often compute these
          from bounding curves, we will generally prefer those that are
          components of $Bd(D)$ so that we
          can use Theorem~\ref{bdFreezes}; or
          those that are
          {\em minimal} bounding curves. A bounding curve~$S$
          for a disk $D$ is {\em minimal} if there is no
          bounding curve $S'$ for $D$ such that
          $\#S' < \#S$.
    \item In particular, a bounding
          curve need not be equal to $Bd(D)$.
          E.g., in the disk~$D$
          shown in Figure~\ref{fig:2sccBdry}(i), $(2,2)$ is a point
          of the bounding curve; however, all of the points
          $c_1$-adjacent to $(2,2)$ are members of~$D$, so
          by Definition~\ref{bdDef}, $(2,2) \not \in Bd(D)$.
          Thus, a bounding curve for $D$ need not be contained
          in $Bd(D)$.
    \item In Definition~\ref{diskDef}, we use $c_2$ adjacency for
          $S$ and we do not require $S$ to be simple. 
          Figure~\ref{fig:2sccBdry} shows why these seem
          appropriate.
          \begin{itemize}
              \item The use of $c_2$ adjacency allows slanted
              segments in bounding curves and makes possible
          a bounding curve in subfigure~(ii) with fewer points
          than the bounding curve in subfigure~(i) in which
          adjacent pairs of the bounding curve are restricted
          to $c_1$ adjacency.
          \item Neither of the bounding curves shown in
                Figure~\ref{fig:2sccBdry} is a $c_2$-simple closed
                curve. E.g., non-consecutive points of each of
                the bounding curves,
                $(0,1)$ and $(1,0)$, are $c_2$-adjacent. The
                bounding curve shown in 
                Figure~\ref{fig:2sccBdry}(ii) is clearly also not a
                $c_1$-simple closed curve.
          \end{itemize}
    \item A closed curve that is not simple may be the boundary
          of a digital image that is not a disk. This is illustrated
          in Figure~\ref{fig:notDisk}.
\end{itemize}

\begin{figure}
    \centering
    \includegraphics[height=1.5in]{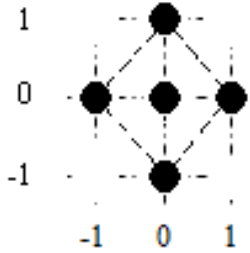}
    \caption{The $c_1$-disk
    $D = \{(x,y) \in \Z^2 \, | \, |x| + |y| < 2\}$.
    The bounding curve
    $S = \{(x,y) \in \Z^2 \, | \,
    |x| + |y| =1\} = D \setminus \{(0,0)\}$
    is not $c_1$-connected.
    }
    \label{fig:diamond}
\end{figure}

\begin{figure}
    \centering
    \includegraphics[height=2in]{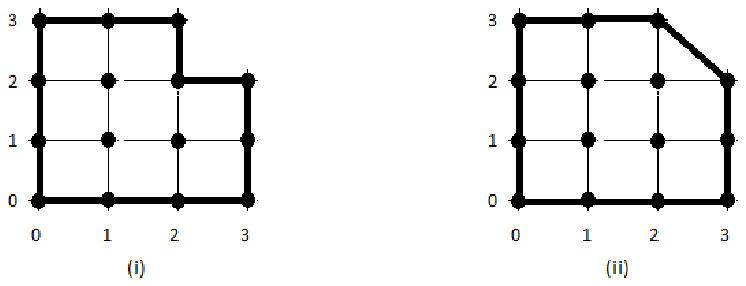}
    \caption{Two views of $D = [0,3]_{\Z} \setminus \{(3,3)\}$, 
    which can be regarded as a $c_1$-disk with either of the
    closed curves shown in dark as a bounding curve. \newline
    (i) The dark line segments show a $c_1$-simple closed curve $S$
    that is a bounding curve for~$D$. \newline
    (ii) The dark line segments show a $c_2$-closed curve $S$
    that is a minimal bounding curve for~$D$. \newline
    Since Theorems~\ref{convexDiskThm} and~\ref{convexDiskThmC2}
    suggest computing minimal freezing sets from bounding curves,
    use of a minimal bounding curve is sometimes preferred. Note
    without the restriction of minimality, were the bounding
    curve in (i) considered,
    Theorem~\ref{convexDiskThm} could incorrectly suggest 
    $(2,2)$ as a point of the minimal freezing set
    for $(D,c_1)$ even though by Definition~\ref{bdDef},
    $(2,2) \not \in Bd(D)$; the minimal
    bounding curve in (ii) does not lead to this incorrect
    suggestion.
    }
    \label{fig:2sccBdry}
\end{figure}

\begin{figure}
    \centering
    \includegraphics{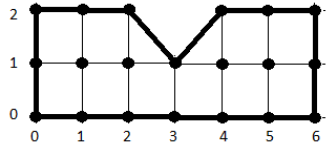}
    \caption{$D = [0,6]_{\Z} \times [0,2]_{\Z} \setminus \{(3,2)\}$
     shown with a bounding curve $S$ in dark segments. $D$ is
     not a disk with either the $c_1$ or the $c_2$ adjacency,
     since with either of these adjacencies,
     $\Z^2 \setminus S$ has two bounded components,
     $\{(1,1), (2,1)\}$ and $\{(4,1), (5,1)\}$.
     }
    \label{fig:notDisk}
\end{figure}

A set $X$ in a Euclidean space $\R^n$ is
{\em convex} if for every pair of distinct
points $x,y \in X$, the line segment
$\overline{xy}$ from $x$ to $y$ is contained in $X$.
The {\em convex hull of} $Y \subset \R^n$,
denoted $hull(Y)$, is the
smallest convex subset of $\R^n$ that contains~$Y$.
If $Y \subset \R^2$ is a finite set, then
$hull(Y)$ is a single point if $Y$ is a singleton;
a line segment if $Y$ has at least 2 members and all are
collinear; otherwise, $hull(Y)$ is a polygonal disk,
and the endpoints of the edges of $hull(Y)$ are its {\em vertices}.

A digital version of convexity can be stated
for subsets of the digital plane~$\Z^2$ as follows.
A finite set $Y \subset \Z^2$ is 
{\em (digitally) convex} if either
\begin{itemize}
    \item $Y$ is a single point, or
    \item $Y$ is a digital line segment, or
    \item $Y$ is a digital disk with a bounding curve $S$
          such that the endpoints of the maximal line segments
          of~$S$ are the vertices of $hull(Y) \subset \R^2$.
\end{itemize}

Let $s_1$ and $s_2$ be sides of a digital disk
$X \subset \Z^2$, i.e., maximal digital line segments
in a bounding curve $S$ of $X$, such that 
$s_1 \cap s_2 = \{p\} \subset X$.
The {\em interior angle of $X$ at $p$} is the
angle formed by $s_1$, $s_2$, and $Int(X)$.

\begin{remark}
Let $(X,\kappa)$ be a digital disk in $\Z^2$, 
$\kappa \in \{c_1,c_2\}$. Let $s_1$ and $s_2$ be sides of
$X$ such that $s_1 \cap s_2 = \{p\} \subset X$. Then
the interior angle of $X$ at $p$ is well defined.
\end{remark}

\begin{proof}
If there exists
$q \in X \setminus (s_1 \cup s_2)$ such that
$q \adj_{c_2} p$, then
the interior angle of $X$ at $p$ is the angle obtained by
rotating $s_1$ about $p$ through $q$ to reach $s_2$.

Otherwise, the angles formed by $s_1$ and $s_2$ measure
45$^\circ$ ($\pi / 4$ radians) and 315$^\circ$
($7 \pi /4$ radians). The latter has a point 
$q \in \Z^2 \setminus X$ such
that $q \adj_{c_2} p$. Therefore, the 45$^\circ$
angle determined by $s_1$ and $s_2$ is the interior angle
of $X$ at $p$.
\end{proof}

\subsection{Tools for determining fixed point sets}
The following assertions will be useful in
determining fixed point and freezing sets.

\begin{prop}
\label{uniqueShortestProp}
{\rm (Corollary~8.4 of~\cite{bs19a})}
Let $(X,\kappa)$ be a digital image and
$f \in C(X,\kappa)$. Suppose
$x,x' \in \Fix(f)$ are such that
there is a unique shortest
$\kappa$-path $P$ in~$X$ from $x$ 
to $x'$. Then $P \subset \Fix(f)$.
\end{prop}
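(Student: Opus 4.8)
The plan is to transport $P$ forward under $f$ and then use minimality together with the uniqueness hypothesis to force $f$ to fix $P$ pointwise. Write $P = \{p_0, p_1, \ldots, p_k\}$ with $p_0 = x$, $p_k = x'$, and length $k$. If $x = x'$ then $P = \{x\} \subset \Fix(f)$ and there is nothing to prove, so I assume $x \neq x'$; since $P$ is a shortest $\kappa$-path from $x$ to $x'$, its length $k$ is the $\kappa$-distance between them.

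First I would apply $f$ to the vertices of $P$ to obtain the sequence $f(p_0), f(p_1), \ldots, f(p_k)$. By the adjacency characterization of continuity, each relation $p_i \adj_{\kappa} p_{i+1}$ gives $f(p_i) \adjeq_{\kappa} f(p_{i+1})$, so this sequence is a $\kappa$-path of length $k$ running from $f(p_0) = f(x) = x$ to $f(p_k) = f(x') = x'$, where I have used $x, x' \in \Fix(f)$.

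Next I would show that this image path has no repeated consecutive vertex. Suppose $f(p_i) = f(p_{i+1})$ for some $i$. Since $\adjeq_{\kappa}$ passes through the equal pair (if $f(p_i) = f(p_{i+1})$ and $f(p_{i+1}) \adjeq_{\kappa} f(p_{i+2})$, then $f(p_i) \adjeq_{\kappa} f(p_{i+2})$), deleting one of the duplicates leaves a $\kappa$-path from $x$ to $x'$ of length $k-1$, contradicting the fact that the $\kappa$-distance from $x$ to $x'$ is $k$. Hence $f(p_i) \adj_{\kappa} f(p_{i+1})$ strictly for every $i$, so $f(p_0), \ldots, f(p_k)$ is itself a shortest $\kappa$-path from $x$ to $x'$. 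The uniqueness hypothesis then applies: as $P$ is the only shortest $\kappa$-path from $x$ to $x'$, the path $f(p_0), \ldots, f(p_k)$ must coincide with $P$ term by term, so $f(p_i) = p_i$ for all $i$, and therefore $P \subset \Fix(f)$.

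The main point requiring care is that continuity only supplies $\adjeq_{\kappa}$ between consecutive images rather than strict adjacency, so a priori the image path could stand still at some step. Ruling this out is exactly where minimality of $P$ does the work, via the single-deletion argument above; I expect that length comparison to be the only real content of the proof, with the check that one deletion still yields a valid $\kappa$-path (transitivity of $\adjeq_{\kappa}$ through the equal pair) and the closing appeal to uniqueness being routine.
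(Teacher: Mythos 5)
Your proof is correct. Note that the paper itself gives no proof of this proposition---it is quoted as Corollary~8.4 of~\cite{bs19a}---and your argument (push $P$ forward by continuity to get a path of length $k$ from $x$ to $x'$, use minimality of $k$ to rule out repeated consecutive vertices, then invoke uniqueness of the shortest path to force $f(p_i)=p_i$) is exactly the standard argument underlying the cited result, so there is nothing to flag.
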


Lemma~\ref{cuPulling} below is in the spirit of ``pulling" as
introduced in~\cite{hmps}.
We quote~\cite{BxFpSets}:
\begin{quote}
    The following assertion can
be interpreted to say that
in a $c_u$-adjacency,
a continuous function that moves
a point~$p$ also [pulls along]
a point that is ``behind"
$p$. E.g., in $\Z^2$, if $q$ and $q'$ are
$c_1$- or $c_2$-adjacent with $q$
left, right, above, or below $q'$, and a
continuous function $f$ moves $q$ to the left,
right, higher, or lower, respectively, then
$f$ also moves $q'$ to the left,
right, higher, or lower, respectively.
\end{quote}

\begin{lem}
\label{cuPulling}
{\rm \cite{BxFpSets}}
Let $(X,c_u)\subset \Z^n$ be a digital image, 
$1 \le u \le n$. Let $q, q' \in X$ be such that
$q \adj_{c_u} q'$.
Let $f \in C(X,c_u)$.
\begin{enumerate}
    \item If $p_i(f(q)) > p_i(q) > p_i(q')$
          then $p_i(f(q')) > p_i(q')$.
    \item If $p_i(f(q)) < p_i(q) < p_i(q')$
          then $p_i(f(q')) < p_i(q')$.
\end{enumerate}
\end{lem}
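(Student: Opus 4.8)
The plan is to reduce the lemma to a few elementary integer inequalities, treating statement~(1) in full and obtaining statement~(2) by the symmetric argument. The two ingredients are the continuity of~$f$, which bounds how far apart the images of $q$ and $q'$ can lie in each coordinate, and the $c_u$-adjacency of $q$ and $q'$, which bounds how far apart $q$ and $q'$ themselves can lie in each coordinate. Because every quantity involved is an integer, the strict inequalities appearing in the hypotheses upgrade to coordinate differences of size exactly~$1$.

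First I would record the two coordinate bounds at the fixed index~$i$. Since $q \adj_{c_u} q'$ and $f$ is continuous, we have $f(q) \adjeq_{c_u} f(q')$; in either case, $f(q) = f(q')$ or $f(q) \adj_{c_u} f(q')$, the definition of $c_u$-adjacency gives $|p_i(f(q)) - p_i(f(q'))| \le 1$. Likewise, from $q \adj_{c_u} q'$ we obtain $|p_i(q) - p_i(q')| \le 1$; combined with the hypothesis $p_i(q) > p_i(q')$ and integrality, this forces $p_i(q) - p_i(q') = 1$, hence $p_i(q') = p_i(q) - 1$.

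Next I would chain these facts. The hypothesis $p_i(f(q)) > p_i(q)$ together with integrality gives $p_i(f(q)) \ge p_i(q) + 1$. Combining this with the image bound yields
\[
p_i(f(q')) \ge p_i(f(q)) - 1 \ge p_i(q) = p_i(q') + 1 > p_i(q'),
\]
which is precisely the conclusion of~(1). Statement~(2) then follows by the identical argument with all inequalities reversed; equivalently, one may conjugate $f$ by the $c_u$-isomorphism of $\Z^n$ that negates the $i$th coordinate, which interchanges the two statements.

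I do not expect a genuine obstacle here, since the lemma amounts to a short piece of integer arithmetic. The one point requiring a little care is that continuity supplies $f(q) \adjeq_{c_u} f(q')$ rather than $f(q) \adj_{c_u} f(q')$, so the possibility $f(q) = f(q')$ must be absorbed into the coordinate bound $|p_i(f(q)) - p_i(f(q'))| \le 1$ rather than being excluded at the outset.
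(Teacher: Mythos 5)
Your proof is correct: the two coordinate bounds you extract (each coordinate of $c_u$-adjacent or equal points differs by at most $1$, applied both to $q,q'$ and, via continuity, to $f(q),f(q')$) combine with integrality exactly as you describe to give $p_i(f(q')) \ge p_i(f(q)) - 1 \ge p_i(q) = p_i(q') + 1 > p_i(q')$. The paper itself states this lemma as a citation to an earlier work and gives no proof, but your argument is the standard one for this result, and you correctly handle the only delicate point, namely that continuity yields $f(q) \adjeq_{c_u} f(q')$ rather than strict adjacency.
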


\begin{figure}
    \centering
    \includegraphics[height=1in]{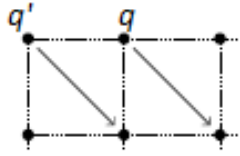}
    \caption{Illustration of Lemma~\ref{cuPulling}. Arrows show
    the images of $q,q'$ under $f \in C(X,c_2)$. Since
    $f(q)$ is to the right of $q$ and $q' \adj_{c_1,c_2} q$ with
    $q'$ to the left of $q$, $f$ pulls $q'$ to the right so that
    $f(q')$ is to the right of $q'$.
    }
    \label{fig:pull}
\end{figure}

Figure~\ref{fig:pull} illustrates Lemma~\ref{cuPulling}.

\begin{thm}
\label{bdFreezes}
{\rm \cite{BxFpSets}}
Let $X \subset \Z^n$ be finite. Then 
for $1 \le u \le n$, $Bd(X)$ is 
a freezing set for $(X,c_u)$.
\end{thm}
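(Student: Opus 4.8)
The plan is to argue by contradiction using the pulling principle of Lemma~\ref{cuPulling}. Suppose $f \in C(X,c_u)$ satisfies $Bd(X) \subset \Fix(f)$ but $f \neq \id_X$. Then there is a point $x \in X$ with $f(x) \neq x$; since every boundary point is fixed, $x \in Int(X)$. Because $f(x)\neq x$, some coordinate is displaced, so there is an index $i$ with $p_i(f(x)) \neq p_i(x)$. By the symmetry between the two parts of Lemma~\ref{cuPulling}, I may assume $p_i(f(x)) > p_i(x)$; the case $p_i(f(x)) < p_i(x)$ will be handled identically with the inequalities reversed and part~(2) of the lemma in place of part~(1).

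The key idea is to propagate this single displacement along an axis-parallel ray until it reaches the boundary, where it must vanish. Starting from $x^{(0)} = x$, I repeatedly step in the direction of decreasing $i$th coordinate, taking $x^{(k+1)}$ to be the point obtained from $x^{(k)}$ by subtracting $1$ from its $i$th coordinate, for as long as the resulting point lies in $X$. Since $X$ is finite this process stops at some $x^{(m)} \in X$ such that the point obtained from $x^{(m)}$ by decreasing its $i$th coordinate by $1$ lies in $\Z^n \setminus X$. That exterior point is $c_1$-adjacent to $x^{(m)}$, so by Definition~\ref{bdDef} we have $x^{(m)} \in Bd(X)$, whence $f(x^{(m)}) = x^{(m)}$.

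Next I run the pulling lemma along this path. Consecutive points $x^{(k)}, x^{(k+1)}$ differ by $1$ in a single coordinate, so they are $c_1$-adjacent and therefore $c_u$-adjacent for every $u \ge 1$, which is exactly the hypothesis Lemma~\ref{cuPulling} needs. The base case $p_i(f(x^{(0)})) > p_i(x^{(0)}) > p_i(x^{(1)})$ holds by our assumption together with the construction, so part~(1) of the lemma gives $p_i(f(x^{(1)})) > p_i(x^{(1)})$. Inductively, once $p_i(f(x^{(k)})) > p_i(x^{(k)})$ is known, combining it with $p_i(x^{(k)}) > p_i(x^{(k+1)})$ and applying part~(1) again yields $p_i(f(x^{(k+1)})) > p_i(x^{(k+1)})$. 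Carrying this to $k = m$ gives $p_i(f(x^{(m)})) > p_i(x^{(m)})$, so $f(x^{(m)}) \neq x^{(m)}$, contradicting $x^{(m)} \in Bd(X) \subset \Fix(f)$. Hence no such displaced $x$ exists and $f = \id_X$.

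I expect the only real subtlety to be bookkeeping rather than any deep obstacle: one must ensure the monotone ray is built from unit axis steps, so that the exit point matches the $c_1$-based definition of $Bd(X)$ and so that every adjacency invoked is genuinely a $c_u$-adjacency, and one must re-verify the hypothesis of Lemma~\ref{cuPulling} at each step — it is precisely the chain of strict inequalities $p_i(f(x^{(k)})) > p_i(x^{(k)}) > p_i(x^{(k+1)})$ that keeps the induction alive. The conceptual crux is simply the observation that ``pulling'' transports a single interior displacement, undiminished in direction, all the way to a fixed boundary point, which is impossible.
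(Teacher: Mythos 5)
Your proof is correct and follows essentially the same strategy the paper uses (the theorem is quoted from \cite{BxFpSets}, but the analogous argument appears in the proof of Theorem~\ref{bdCurveFreezeSet}): propagate the displacement of a non-fixed interior point via Lemma~\ref{cuPulling} along an axis-parallel path until it forces a boundary point to move, contradicting $Bd(X) \subset \Fix(f)$. Your version merely makes explicit the induction that the paper compresses into a single invocation of the pulling lemma applied to a segment endpoint.
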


\begin{thm}
\label{bdCurveFreezeSet}
Let $D$ be a digital disk in $\Z^2$. Let
$S$ be a bounding curve for $D$. Then $S$ is
a freezing set for $(D,c_1)$ and for $(D,c_2)$.
\end{thm}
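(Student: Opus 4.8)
The plan is to bypass any direct manipulation of continuous maps on $D$ and instead reduce the statement to Theorem~\ref{bdFreezes}, which already guarantees that $Bd(D)$ is a freezing set for $(D,c_u)$, $u\in\{1,2\}$. The reduction rests on a trivial monotonicity property of freezing sets that I would record first: if $A$ is a freezing set for $(X,\kappa)$ and $A\subseteq B\subseteq X$, then $B$ is also a freezing set, since for any $f\in C(X,\kappa)$ with $B\subseteq\Fix(f)$ we get $A\subseteq\Fix(f)$, whence $f=\id_X$. Because $S\subseteq D$ by Definition~\ref{diskDef} (as $D$ is the union of $S$ with the finite $c_1$-component of $\Z^2\setminus S$), it therefore suffices to prove the single set-theoretic inclusion $Bd(D)\subseteq S$.

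Next I would establish that inclusion. Write $Int(S)$ for the finite $c_1$-component of $\Z^2\setminus S$ and $Ext(S)$ for the infinite one, so that Definition~\ref{diskDef} gives $D=S\cup Int(S)$ and hence $\Z^2\setminus D=Ext(S)$. Let $x\in Bd(D)$. By Definition~\ref{bdDef} there is some $y\in\Z^2\setminus D=Ext(S)$ with $y\adj_{c_1}x$, and of course $x\in D=S\cup Int(S)$. I claim $x\in S$. If instead $x\in Int(S)$, then $x$ and $y$ both lie in $\Z^2\setminus S$, are $c_1$-adjacent, yet belong to the two distinct $c_1$-components $Int(S)$ and $Ext(S)$ of $\Z^2\setminus S$; this contradicts the fact that $c_1$-adjacent points of $\Z^2\setminus S$ lie in a common $c_1$-component. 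Hence $x\in S$, proving $Bd(D)\subseteq S$.

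Finally I would assemble the pieces: the chain $Bd(D)\subseteq S\subseteq D$ together with Theorem~\ref{bdFreezes} (invoked once with $u=1$ and once with $u=2$) and the monotonicity observation yields at once that $S$ is a freezing set for both $(D,c_1)$ and $(D,c_2)$. Note that the definition of $Bd(D)$ uses $c_1$-adjacency irrespective of the adjacency chosen on $D$, so the same set $Bd(D)$, and hence the same inclusion, serves both cases without modification.

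The only genuinely substantive step is the inclusion $Bd(D)\subseteq S$, and I expect its crux to be exactly the component-separation argument above: a boundary point and its exterior witness are $c_1$-adjacent, so the witness being in $Ext(S)$ forces the boundary point out of $Int(S)$ and onto $S$. Everything else is bookkeeping. A minor point to keep in mind is that $S$ need not be a \emph{simple} closed curve, so I would lean on the separation built directly into Definition~\ref{diskDef} (two $c_1$-components of $\Z^2\setminus S$, one finite and one infinite) rather than on the Jordan Curve Theorem, which assumes simplicity.
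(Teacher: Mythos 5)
Your proof is correct, but it takes a genuinely different route from the paper. The paper reproves the statement from scratch in the style of Theorem~\ref{bdFreezes}: given $f\in C(D,\kappa)$ fixing $S$ and a hypothetical non-fixed point $x$, it places $x$ on a maximal horizontal segment $\overline{ab}$ and a maximal vertical segment $\overline{cd}$ with $\{a,b,c,d\}\subset S$ and applies the pulling Lemma~\ref{cuPulling} in four cases to contradict $f|_S=\id_S$. You instead prove the single inclusion $Bd(D)\subseteq S$ (via the component-separation argument: a boundary point with an exterior $c_1$-witness cannot lie in the finite $c_1$-component of $\Z^2\setminus S$) and then invoke Theorem~\ref{bdFreezes} together with the trivial upward monotonicity of freezing sets. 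Both arguments are sound, but yours has a real advantage: the paper's claim that the endpoints $a,b,c,d$ of those maximal segments lie in $S$ (rather than merely in $Bd(D)$) is exactly the kind of fact your inclusion $Bd(D)\subseteq S$ makes precise, so your lemma quietly fills a gap the paper leaves implicit. What the paper's direct approach buys is independence from Theorem~\ref{bdFreezes} as a black box and a template reused later (e.g., in Example~\ref{nonConvC2Exl}); what yours buys is brevity, a reusable structural fact about bounding curves versus boundaries (complementing the paper's remark that $S\subseteq Bd(D)$ can fail, you show the reverse inclusion always holds), and a uniform treatment of $c_1$ and $c_2$ with no case analysis.
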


\begin{proof}
This is like the proof of 
Theorem~\ref{bdFreezes} in~\cite{BxFpSets}.
Let $\kappa \in \{c_1,c_2\}$. Let
$f \in C(D,\kappa)$ such that
$S \in \Fix(f)$. Suppose there exists
$x \in D$ such that $f(x) \neq x$.
Then $x$ lies on a horizontal segment
$\overline{ab}$ and on a vertical segment
$\overline{cd}$ such that 
$\{a,b,c,d\} \subset S$, $p_1(a) < p_1(b)$,
and $p_2(c) < p_2(d)$.
\begin{itemize}
    \item If $p_1(f(x)) > p_1(x)$ then by
          Lemma~\ref{cuPulling}, 
          $p_1(f(a)) > p_1(a)$, contrary to
          $a \in S \subset \Fix(f)$.
    \item If $p_1(f(x)) < p_1(x)$ then by
          Lemma~\ref{cuPulling}, 
          $p_1(f(b)) < p_1(b)$, contrary to
          $b \in S \subset \Fix(f)$.
    \item If $p_2(f(x)) > p_2(x)$ then by
          Lemma~\ref{cuPulling}, 
          $p_1(f(c)) > p_1(c)$, contrary to
          $c \in S \subset \Fix(f)$.
    \item If $p_2(f(x)) < p_2(x)$ then by
          Lemma~\ref{cuPulling}, 
          $p_1(f(d)) < p_1(d)$, contrary to
          $d \in S \subset \Fix(f)$.
\end{itemize}
In all cases, we have a contradiction brought
on by assuming $x \not \in \Fix(f)$. Therefore,
$f = \id_D$, so $S$ is a freezing set for
$(D,\kappa)$.
\end{proof}

\section{$c_1$-Freezing sets for disks in $\Z^2$}
The following can be interpreted as stating that
the set of ``corner points" form a freezing set for a
digital cube with the $c_1$ adjacency.

\begin{thm}
\label{corners-min}
{\rm \cite{BxFpSets}}
Let $X = \Pi_{i=1}^n [0,m_i]_{\Z}$.
Let $A = \Pi_{i=1}^n \{0,m_i\}$.
Then $A$ is a freezing set for $(X,c_1)$; minimal for $n \in \{1,2\}$.
\end{thm}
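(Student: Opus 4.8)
The plan is to treat the two assertions separately: first that $A$ is a freezing set for $(X,c_1)$ for every $n$, and then that this freezing set is minimal when $n\in\{1,2\}$. A pleasant surprise is that the freezing-set direction needs neither Lemma~\ref{cuPulling} nor Theorem~\ref{bdFreezes}; the real content of the theorem lies in the minimality claim.

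For the freezing-set claim, let $f\in C(X,c_1)$ with $A\subset\Fix(f)$; I must show $f=\id_X$ (Definition~\ref{freezeDef}). For $p=(p_1,\dots,p_n)\in X$, let $r(p)=\#\{i : p_i\notin\{0,m_i\}\}$ count its non-extreme coordinates, and I induct on $r(p)$. The base case $r(p)=0$ says $p\in A\subset\Fix(f)$. For the inductive step, given $p$ with $r(p)\ge 1$, choose a coordinate $i$ with $0<p_i<m_i$ and let $p^-,p^+$ be obtained from $p$ by replacing its $i$th coordinate by $0$ and by $m_i$ respectively. Since $r(p^\pm)=r(p)-1$, the inductive hypothesis gives $p^-,p^+\in\Fix(f)$. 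As $p^-$ and $p^+$ differ only in coordinate $i$ (and $0<p_i<m_i$ forces $m_i\ge 2$, so $p^-\ne p^+$), the unique shortest $c_1$-path between them in $X$ is the axis-parallel segment joining them, which passes through $p$; hence Proposition~\ref{uniqueShortestProp} places that whole segment, and in particular $p$, in $\Fix(f)$. Thus every point of $X$ is fixed. I expect this step to be routine once the induction is organized by $r(p)$; the only thing to check is uniqueness of the shortest path, which holds because any $c_1$-path realizing the distance $m_i$ between $p^\pm$ must step monotonically in coordinate $i$ while leaving the others fixed.

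For minimality when $n\in\{1,2\}$, I will show that deleting any single corner from $A$ destroys the freezing property. Since a superset of a freezing set is itself a freezing set, and every proper subset of $A$ is contained in some $A\setminus\{a\}$, it suffices to produce, for each $a\in A$, a map $f_a\in C(X,c_1)$ with $A\setminus\{a\}\subset\Fix(f_a)$ but $f_a\ne\id_X$. For $n=1$ and $a=0$ I take $f_a$ to be the identity except $f_a(0)=1$; for $n=2$ and $a=(0,0)$ I take $f_a$ to be the identity except $f_a(0,0)=(1,1)$, with the analogous maps at the other corners obtained from the reflective symmetries of the box. In each case $f_a$ fixes every corner but $a$ and is non-identity, so the remaining task is to verify $c_1$-continuity, which reduces to comparing $f_a(a)$ with the (fixed) images of the $c_1$-neighbors of $a$.

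This continuity check is the main obstacle, and it is exactly where the hypothesis $n\le 2$ enters. For $n=2$ the point $(1,1)$ is $c_1$-adjacent to both neighbors $(1,0)$ and $(0,1)$ of $(0,0)$, so $f_a$ is continuous; but the same ``diagonal push'' fails for $n\ge 3$, since $(1,\dots,1)$ differs from a neighbor such as $(1,0,\dots,0)$ in $n-1\ge 2$ coordinates and is therefore not $c_1$-adjacent to it. Hence the delicate point of the theorem is not the freezing-set direction but the construction of continuous corner-moving maps witnessing minimality, together with the observation that such maps exist precisely in dimensions $1$ and $2$.
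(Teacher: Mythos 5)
Your argument is correct. Note first that the paper itself gives no proof of Theorem~\ref{corners-min}: it is imported from~\cite{BxFpSets}, where (as in the analogous results proved in this paper, e.g.\ Theorem~\ref{convexDiskThm} and Example~\ref{axesParallelCounterexl}) the freezing-set direction is run in two stages --- first freeze the boundary, using Proposition~\ref{uniqueShortestProp} on the edges between corners, and then invoke Theorem~\ref{bdFreezes} (whose proof rests on the pulling Lemma~\ref{cuPulling}) to freeze the interior. Your induction on the number of non-extreme coordinates collapses these two stages into one: each point with $r(p)=k$ lies on the unique shortest $c_1$-path between two already-fixed points with $r=k-1$, so Proposition~\ref{uniqueShortestProp} alone suffices and Lemma~\ref{cuPulling}/Theorem~\ref{bdFreezes} are never needed. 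That is a genuinely cleaner route, and your verification that the axis-parallel segment is the unique shortest $c_1$-path (any length-$m_i$ path must advance coordinate $i$ monotonically and hold the others fixed) is the right justification. Your minimality argument is the standard one used throughout this paper (compare the maps $f_p$ in Example~\ref{axesParallelCounterexl} and the $90^{\circ}$ case of Theorem~\ref{convDiskThmActual}): a single-point diagonal displacement of a deleted corner, continuous precisely because for $n\le 2$ the diagonal neighbor is $c_1$-adjacent to every $c_1$-neighbor of the corner; and your observation that this push fails for $n\ge 3$ is consistent with the remark following the theorem that minimality fails for $n=3$. The only loose end is the degenerate case $m_i=0$ (or $m_i$ small) in dimension $2$, where $(1,1)\notin X$ and the diagonal target must be replaced by the one-dimensional push $f_a(0,0)=(0,1)$; this is trivial to patch and does not affect the substance of the proof.
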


\begin{remark}
Example~5.16 of~{\rm \cite{BxFpSets}} shows that the freezing set
of Theorem~\ref{corners-min} need not be minimal for
$n=3$.
\end{remark}

The argument used to prove Theorem~\ref{corners-min}
may lead one to ask if this theorem can be generalized
as follows:
\begin{quote}
    Given a digital disk $D \subset \Z^2$ such
    that all of the maximal segments of a bounding curve of $D$
    are horizontal or vertical, is the set
    of the endpoints of the maximal segments
    of a bounding simple closed curve $S$ a minimal 
    freezing set for $(D,c_1)$?
\end{quote}

The following provides a negative answer to this
question.

\begin{exl}
\label{axesParallelCounterexl}
Let $D = [0,3]_{\Z} \times [0,6]_{\Z}
         \setminus \{(3,3)\})$.
Then 
\[A = \{(0,0), (3,0), (3,2), (3,4), (3,6), (0,6)\} \]
(see Figure~\ref{fig:nonConvHorzVertBd})
is a minimal freezing set for $(D,c_1)$.
Note $(2,2)$ and $(2,4)$ are endpoints of maximal horizontal
and vertical bounding segments of $D$ and are 
not members of $A$. While $(2,2)$ and $(2,4)$ are members of a
bounding curve for~$D$, they are not members of a minimal bounding
curve, which includes edges from $(3,4)$ to $(2,3)$ and from
$(2,3)$ to $(3,2)$.
\end{exl}

    \begin{figure}
        \centering
        \includegraphics[height=2.5in]{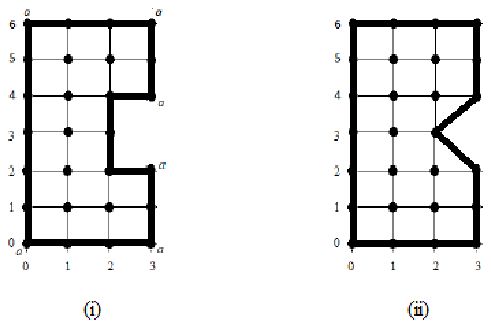}
        \caption{There are distinct boundary
        curves for the disk $D$ that contain the
        horizontal segments from $(0,0)$ to $(3,0)$ and from
        $(0,6)$ to $(3,6)$; and vertical segments from
        $(0,0)$ to $(0,6)$, from $(3,0)$ to $(3,2)$, and from
        $(3,4)$ to $(3,6)$. \newline
        (i) We can complete a boundary curve by using the horizontal segments from $(2,2)$ to $(3,2)$ and from $(2,4)$ to $(3,4)$
        and the vertical segment from $(2,2)$ to $(2,4)$, as shown in dark. This lets
        us view $D$ as a disk with horizontal and vertical
        sides. Members of the minimal freezing
        set $A$ for $(D,c_1)$, determined in
        Example~\ref{axesParallelCounterexl},
        are marked ``{\em a}". Note
        $\{(2,2), (2,4)\} \cap A = \emptyset$. 
        $(2,2)$ and $(2,4)$ are endpoints of a 
        maximal horizontal segment of a
        bounding curve, but not of
        the minimal bounding curve~$S$;
        the latter is shown in (ii). Indeed, by Definition~\ref{diskDef},
        $\{(2,2), (2,4)\} \subset Int(D)$.
        \newline
        (ii) Alternately, we can complete a boundary curve by using
        the slanted line segments from $(2,3)$ to $(3,4)$ and
        from $(2,3)$ to $(3,2)$. This is a minimal boundary curve
        $S$ that lets us view $D$ as in Example~\ref{nonConvC2Exl}.
        A minimal freezing set for $(D,c_2)$ is 
        $S \setminus \{(2,3)\}$.
        }
        \label{fig:nonConvHorzVertBd}
    \end{figure}

\begin{proof}
Let $f \in C(D,c_1)$ such that $A \subset \Fix(f)$.
It follows from Proposition~\ref{uniqueShortestProp}
that the vertical segments
$\{0\} \times [0,6]_{\Z}$, 
$\{3\} \times [0,2]_{\Z}$, and $\{3\} \times [4,6]_{\Z}$,
the horizontal segments
$[0,3]_{\Z} \times \{0\}$ and
$[0,3]_{\Z} \times \{6\}$, and the
path 
\[\{(3,2), (2,2), (2,3), (2,4), (3,4) \}
\]
are all
subsets of $\Fix(f)$. Since 
the union of these paths is a bounding
curve $S$ for $D$, we have 
$S \subset \Fix(f)$. That $A$ is a freezing set
follows from Theorem~\ref{bdCurveFreezeSet}.

To show $A$ is a minimal freezing set, we observe that
for each $p \in A$ there is a function $f_p: D \to D$ defined by
\[ f_p(x) = \left \{ \begin{array}{ll}
     (1,1) & \mbox{if } x = p= (0,0); \\
     (2,1) & \mbox{if } x = p \in \{(3,0),(3,2)\}; \\
     (2,5) & \mbox{if } x = p \in \{(3,4), (3,6)\}; \\
     (1,5) & \mbox{if } x = p= (0,6); \\
    x & \mbox{if } x \neq p.  \\
 \end{array} \right .
\]
It is easily seen that each 
$f_p \in C(D, c_1)$, with 
$\Fix(f_p) = D \setminus \{p\}$.
It follows that $A \setminus \{p\}$ is not a freezing set
for $(D,c_1)$, so $A$ is a minimal freezing set.
\end{proof}

\begin{definition}
\label{thickness}
Let $X \subset \Z^2$ be a digital disk. We say $X$ is
{\em thick} if the following are satisfied. For some bounding
curve $S$ of $X$,
\begin{itemize}
    \item for every slanted segment~$S$ of $Bd(X)$,
 if  $p \in S$ is not an endpoint of  $S$, 
then there exists $c \in X$ such that 
(see Figure~\ref{fig:innerBdPt})
\begin{equation}
    \label{slantSegProp}
   c \adj_{c_2} p \not \adj_{c_1} c,
\end{equation}
and
\item if $p$ is the vertex of a 135$^\circ$ ($3 \pi / 4$
      radians) interior angle $\theta$ of $S$,
      there exist $b,b' \in X$
      such that $b$ and $b'$ are in the interior of $\theta$ and
      (see Figure~\ref{fig:degrees135c1})
      \[ b \adj_{c_2} p \not \adj_{c_1} b~~~ \mbox{ and }~~~ 
      b' \adj_{c_1} p.
       \]
\end{itemize}
\end{definition}

    \begin{figure}
        \centering
        \includegraphics[height=2in]{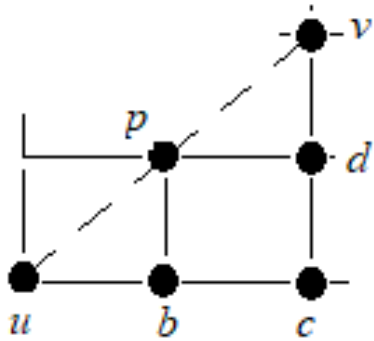}
        \caption{$p \in \overline{uv}$ in a bounding curve,
        with $\overline{uv}$ slanted.
        Note $u \not \adj_{c_1} p \not \adj_{c_1} v$,
        $p \adj_{c_2} c \not \adj_{c_1} p$,
        $\{p,c\} \subset N(\Z^2,c_1,b) \cap N(\Z^2,c_1,d)$. If
        $X$ is thick then $c \in X$.
        (Not meant to be understood as showing all of $X$.)}
        \label{fig:innerBdPt}
    \end{figure}

         \begin{figure}
        \centering
        \includegraphics[height=2in]{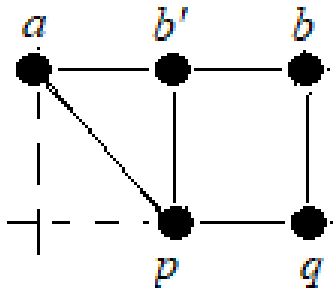}
        \caption{$\angle apq$ is an angle of
        135$^ \circ$ degrees ($3 \pi /4$ radians)
        of a bounding curve of $X$ at $p$, with
        $\overline{ap} \cup \overline{pq}$
            a subset of the bounding curve. If
            $X$ is thick then $b,b' \in X$. (Not meant to
        be understood as showing all of $X$.)
        }
        \label{fig:degrees135c1}
        \end{figure}

Examples of digital images that fail to be thick are shown
in Figure~\ref{fig:notThick}.

\begin{figure}
    \includegraphics{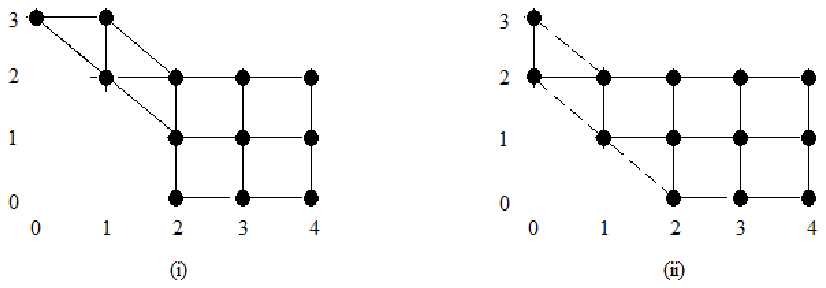}
    \caption{Two digital disks that are not thick. \newline (i) $(1,2)$
       is a non-endpoint of a slanted boundary segment for which
       there is no point corresponding to $c$ of 
       Figure~\ref{fig:innerBdPt}. \newline (ii) $(0,2)$ is the
       vertex of a 135$^\circ$ interior angle of a bounding curve
       for which there is no point corresponding to $b$ of
       Figure~\ref{fig:degrees135c1}.
       }
    \label{fig:notThick}
\end{figure}

The following expands on the
dimension~2 case of Theorem~\ref{corners-min} to give a subset
of $Bd(X)$ that is a freezing set.

\begin{thm}
\label{convexDiskThm}
Let $X$ be a finite digital image in~$\Z^2$ such that
$Bd(X)=\bigcup_{i=1}^n S_i$ is the disjoint union of 
$c_2$-closed curves $S_i$.
Let $A_1$ be the set of points $x \in Bd(X)$ such that
$x$ is an endpoint of a maximal horizontal or a
maximal vertical edge of some $S_i$. Let $A_2$ 
be the union of slant line segments in $Bd(X)$.
Then $A = A_1 \cup A_2$ is a freezing set
for $(X,c_1)$.
\end{thm}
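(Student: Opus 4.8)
The plan is to reduce the statement to Theorem~\ref{bdFreezes}: it suffices to show that the hypothesis $A \subset \Fix(f)$ already forces $Bd(X) \subset \Fix(f)$, since Theorem~\ref{bdFreezes} (with $n=2$, $u=1$) then gives $f=\id_X$ and hence that $A$ is a freezing set. So I would fix $f \in C(X,c_1)$ with $A \subset \Fix(f)$, and the entire argument becomes fixing $Bd(X)$ one maximal segment at a time.

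The structural observation driving the proof is that each $S_i$ is the union of its maximal horizontal, vertical, and slant segments: since $S_i$ is a $c_2$-closed curve, consecutive points differ by a horizontal, vertical, or diagonal unit step, so a maximal run of equal steps is exactly a maximal digital segment of one of these three types, and every point of $S_i$ lies on such a run. Hence the maximal segments of the $S_i$ cover $Bd(X)=\bigcup_{i=1}^n S_i$, and it is enough to fix each of them. For a slant segment there is nothing to prove: it lies in $A_2 \subset A \subset \Fix(f)$. For a maximal horizontal segment $\{a,a+1,\dots,b\}\times\{y\}$ its endpoints $(a,y)$ and $(b,y)$ lie in $A_1$ and so are fixed; I would then invoke Proposition~\ref{uniqueShortestProp} after checking that this segment is the \emph{unique} shortest $c_1$-path in $X$ between its endpoints. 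Uniqueness is clear: any $c_1$-path between these points has length at least the Manhattan distance $b-a$, the straight segment attains this length and lies in $X$ (being part of $Bd(X)$), and in $\Z^2$ the axis-aligned geodesic between two points sharing a coordinate is unique, so it remains the unique shortest path inside $X$. Thus the whole segment, interior points included, is fixed. Maximal vertical segments are handled identically, and assembling the three cases gives $Bd(X)\subset\Fix(f)$, completing the reduction.

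I expect the only real friction to be the bookkeeping in the decomposition, i.e.\ confirming that every point of $Bd(X)$ is captured by one of the three segment families, with particular attention to corner and degenerate cases. A junction of a horizontal and a vertical segment is an endpoint of each and so lies in $A_1$; a junction of two slant segments is an endpoint of each and so lies in $A_2$; a single isolated point of a run is its own endpoint and lies in $A_1$ as a degenerate maximal horizontal (equivalently vertical) edge. One should verify that no point escapes all three families, which follows from the earlier Remark that a digital segment must be horizontal, vertical, or slanted. The second point needing care is that Proposition~\ref{uniqueShortestProp} requires uniqueness of the shortest path \emph{within $X$}, not merely within $\Z^2$; this transfers correctly precisely because the realizing straight segment lies in $X$, so the $X$-distance equals the $\Z^2$-distance and the unique $\Z^2$-geodesic is the unique $X$-geodesic.
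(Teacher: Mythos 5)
Your proposal is correct and follows essentially the same route as the paper: fix the horizontal and vertical boundary edges via Proposition~\ref{uniqueShortestProp} applied to their endpoints in $A_1$, note the slanted edges are already fixed as subsets of $A_2$, conclude $Bd(X) \subset \Fix(f)$, and finish with Theorem~\ref{bdFreezes}. Your added justifications of the segment decomposition and of the uniqueness of the shortest $c_1$-path are details the paper leaves implicit, but the argument is the same.
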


\begin{proof}
Let $x,x'$ be distinct members of $A_1$ that are
endpoints of the same maximal horizontal or vertical
edge $E$ in some $S_i$. Then $E$ contains the unique
shortest $c_1$-path in $X$ from $x$ to $x'$. By
Proposition~\ref{uniqueShortestProp}, if
$f \in C(X,c_1)$ and $\{x,x'\} \subset \Fix(f)$,
then $E \subset \Fix(f)$. By hypothesis
we also have that
$A_2 \subset \Fix(f)$, so $S_i \subset \Fix(f)$. Therefore,
$Bd(X) \subset \Fix(f)$.
By Theorem~\ref{bdFreezes}, $f = \id_X$. Thus $A$ is a 
freezing set for $(X,c_1)$.
\end{proof}

\begin{remark}
The set $A$ of Theorem~\ref{convexDiskThm}
need not be minimal. This is shown in 
Example~\ref{axesParallelCounterexl}, 
where $(2,3)$, as a member of a 
slanted edge of a minimal bounding curve
(see Figure~\ref{fig:nonConvHorzVertBd}), is a
member of the set $A$ of Theorem~\ref{convexDiskThm}, but
is not a member of the minimal freezing set.
\end{remark}

\begin{thm}
\label{convDiskThmActual}
Let $X$ be a thick convex disk with a
    bounding curve $S$,
    Let $A_1$ be the set of points $x \in S$ such that
$x$ is an endpoint of a maximal horizontal or a
maximal vertical edge of $S$. Let $A_2$ 
be the union of slant line segments in $S$.
Then $A = A_1 \cup A_2$ is a minimal 
freezing set for $(X,c_1)$ (see  
Figure~\ref{fig:convexRlts}(ii)).
\end{thm}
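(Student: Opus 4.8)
The plan is to prove the two assertions bundled in the statement separately: that $A$ is a freezing set, and that it is minimal. The first is essentially already in hand. For a convex disk the prescribed bounding curve $S$ is a single $c_2$-closed curve, and I would argue directly that $A$ forces all of $S$ into $\Fix(f)$ whenever $f\in C(X,c_1)$ fixes $A$: for each maximal horizontal or vertical edge $E$ of $S$, its two endpoints lie in $A_1$, and $E$ is the unique shortest $c_1$-path in $X$ joining them (a straight axis-parallel segment cannot be matched in length by any path that leaves the line), so Proposition~\ref{uniqueShortestProp} gives $E\subset\Fix(f)$; meanwhile every slant segment of $S$ lies entirely in $A_2\subset\Fix(f)$. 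Since $S$ is the union of its maximal edges, $S\subset\Fix(f)$, and Theorem~\ref{bdCurveFreezeSet} yields $f=\id_X$. (One could alternatively invoke Theorem~\ref{convexDiskThm}.) Thus the genuine content of the theorem is minimality.

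For minimality I would show that for each $p\in A$ the set $A\setminus\{p\}$ is not a freezing set, by exhibiting an $f_p\in C(X,c_1)$ with $A\setminus\{p\}\subset\Fix(f_p)$ but $f_p(p)\ne p$. In every case I take the cheapest possible deformation: $f_p$ is the identity off $p$ and sends $p$ to a single target $p'\in X$ with $p'\ne p$. Because $f_p$ moves only $p$, the sole continuity constraints arise on edges incident to $p$, so $f_p$ is $c_1$-continuous exactly when $p'\adjeq_{c_1}x$ for every $x\in N(X,c_1,p)$, and then $\Fix(f_p)=X\setminus\{p\}\supset A\setminus\{p\}$ automatically. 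Minimality therefore reduces to the purely local claim that the $c_1$-neighbours of $p$ inside $X$ admit a common $c_1$-adjacent-or-equal point of $X$ other than $p$.

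I would verify this local claim by a case analysis keyed to the position of $p$, using that the interior angles of a convex digital disk are $45^\circ$, $90^\circ$, or $135^\circ$. If $p$ is interior to a slant segment, its two $c_1$-neighbours in $X$ lie on the interior side and their only common $c_1$-neighbour besides $p$ is the diagonal point $c$ of the slant clause of Definition~\ref{thickness} (Figure~\ref{fig:innerBdPt}), which thickness places in $X$; set $p'=c$. If $p$ is a $135^\circ$ vertex, a short computation shows the two $c_1$-neighbours determined by the two edge directions again have a unique common $c_1$-neighbour besides $p$, namely the interior diagonal point $b$ of the $135^\circ$ clause of Definition~\ref{thickness} (Figure~\ref{fig:degrees135c1}); thickness gives $b\in X$, so $p'=b$. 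If $p$ is a $90^\circ$ vertex of two axis-parallel edges, its two neighbours force $p'$ to be the $c_2$-diagonal interior point, which lies in $X$ because a convex disk is ``filled'' near such a corner; and if $p$ is a $45^\circ$ vertex, or a $90^\circ$ vertex where two slants meet, it has a unique $c_1$-neighbour in $X$, onto which we simply collapse $p$.

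The hard part will be the step ``$p'\in X$,'' which is the only place the hypotheses \emph{thick} and \emph{convex} are used and is exactly what must fail in their absence (compare the non-thick disks of Figure~\ref{fig:notThick} and the non-convex disk of Example~\ref{axesParallelCounterexl}). The $135^\circ$ vertices are the subtle case: before invoking thickness it appears that $p$ cannot be moved at all without also sliding the entire axis-parallel edge running into $p$, which is blocked because that edge's far endpoint sits in $A_1$; the resolution is precisely the interior diagonal point $b$ that thickness supplies. I would also check orientation by orientation that the candidate $p'$ is genuinely $c_1$-adjacent to both relevant neighbours (a brief but error-prone computation), and note that moving $p$ onto $p'$ never disturbs a point of $A\setminus\{p\}$, which is immediate since $f_p$ fixes everything but $p$. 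Assembling these cases shows each $p\in A$ is indispensable, so $A$ is a minimal freezing set.
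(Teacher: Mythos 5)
Your proposal is correct and follows essentially the same route as the paper: the freezing-set half is delegated to the unique-shortest-path argument of Theorem~\ref{convexDiskThm} plus Theorem~\ref{bdCurveFreezeSet}, and minimality is proved by moving each $p\in A$ alone to a single nearby target, with the same case split (interior of a slant segment and $135^\circ$ vertices handled via thickness, $90^\circ$ corners via an interior diagonal or $c_1$-neighbour, $45^\circ$ corners by collapsing onto the unique $c_1$-neighbour) and the same identification of where convexity restricts the interior angles. Your explicit remark that continuity of a one-point perturbation reduces to $p'\adjeq_{c_1}x$ for all $x\in N(X,c_1,p)$ makes the verification slightly cleaner than the paper's figure-by-figure checks, but the substance is identical.
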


\begin{figure}
    \centering
    \includegraphics{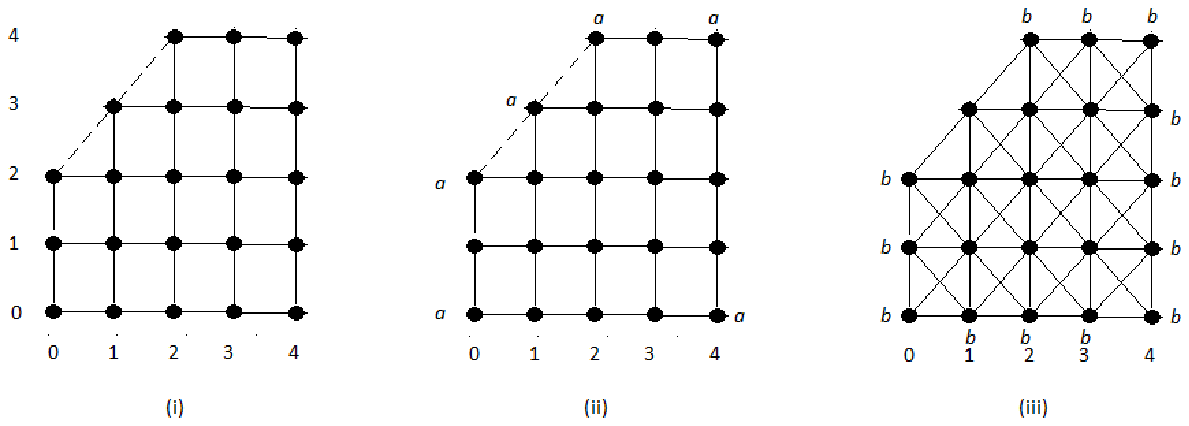}
    \caption{The convex disk 
    $D = [0,4]_{\Z}^2 \setminus \{(0,3),(0,4),(1,4)\}$. The dashed
    segment from $(0,2)$ to $(2,4)$ shown in (i) and (ii) indicates
    part of the bounding curve and not $c_1$-adjacencies. \newline
    (i) $D$ with a $c_2$ bounding curve. \newline
    (ii) $(D,c_1)$ with members of a minimal freezing set $A$
    marked ``{\em a}" - these are the endpoints of the maximal 
    horizontal and vertical segments of the bounding curve, 
    and all points of the slanted segment of the bounding curve,
    per Theorem~\ref{convexDiskThm}. \newline
    (iii) $(D,c_2)$ with members of a minimal freezing set 
    $B$ marked ``{\em b}" - these are the endpoints of the maximal
    slanted edge and all the points of the horizontal and vertical
    edges of the bounding curve,
    per Theorem~\ref{convexDiskThmC2}.
    }
    \label{fig:convexRlts}
\end{figure}

\begin{proof}
That $A$ is a freezing set follows as in the
proof of Theorem~\ref{convexDiskThm}.
To show $A$ is minimal, we must show that if we
remove a point $p$ from $A$, the remaining set
$A \setminus \{p\}$ is not a freezing set. 

We start by considering $p \in A_1$. 
Since $X$ is convex, the interior angle of $S$ at $p$ must be
$45^{\circ}$ ($\pi/4$ radians),
$90^{\circ}$ ($\pi/2$ radians), or $135^{\circ}$ 
($3\pi/4$ radians).
 \begin{figure}
        \centering
        \includegraphics{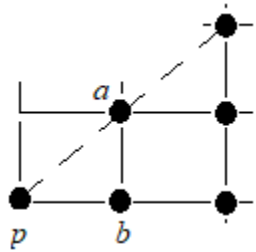}
        \caption{$\angle apb$ is a
        $45^{\circ}$ ($\pi/4$ radians) interior
        angle of a bounding curve at $p \in A_1$.
        (Not meant to
        be understood as showing all of $X$.)}
        \label{fig:degrees45}
    \end{figure}
\begin{itemize}
    \item Suppose the interior angle of $S$ at $p$ is
   $45^{\circ}$ ($\pi/4$ radians). Let $b$ be a
    point of $S$ that is $c_1$-adjacent 
    to $p$ on the horizontal or vertical edge of this angle (see Figure~\ref{fig:degrees45}). Then the function
    $f: X \to X$ defined by
    \[ f(x) = \left \{ \begin{array}{ll}
        x & \mbox{if } x \neq p; \\
        b & \mbox{if } x = p,
    \end{array}
      \right .
    \]
    satisfies $f \in C(X,c_2)$, with 
    $\Fix(f) = X \setminus \{p\}$. Thus
    $X \setminus \{p\}$ is not a freezing set for $(X,c_2)$.
    \item Suppose the interior angle of $S$ at $p$ is
    $90^{\circ}$ ($\pi/2$ radians). Let $a,b$ be the
    points of $S$ that are $c_1$-adjacent 
    to $p$ on the horizontal and vertical edges of this angle
    and let $q$ be the point of
    $Int(X)$ that is $c_1$-adjacent to each of
    $a$ and $b$ (see Figure~\ref{fig:degrees90a}).
    \begin{figure}
        \centering
        \includegraphics{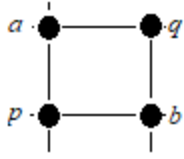}
        \caption{$\angle apb$ is a
        $90^{\circ}$ ($\pi/2$ radians)
        angle of a bounding curve of $X$ at $p \in A_1$, with
        horizontal and vertical sides.
        $q \in Int(X)$. (Not meant to
        be understood as showing all of $X$.)}
        \label{fig:degrees90a}
    \end{figure}
    Then the function $f: X \to X$ defined by
    \[ f(x) = \left \{ \begin{array}{ll}
        q & \mbox{if } x=p;  \\
        x & \mbox{if } x \neq p
    \end{array}
    \right .
    \]
    belongs to $C(X,c_1)$. Also,
    $\Fix(f) = X \setminus \{p\}$. Thus,
    $A \setminus \{p\}$ is not a freezing set
    for $(X,c_1)$.
    \item Suppose the interior angle of $S$ at $p$ is
    $135^{\circ}$ ($3\pi/4$ radians). 
        Let $a,q \in S$ be such that
    $a$ and $q$ are the members of this angle that are
    $c_2$-adjacent to $p$, where $\overline{ap}$ is slanted
    and $\overline{pq}$ is horizontal or vertical. Since $X$
    is thick, Definition~\ref{thickness} yields that
    there exists $b \in X$ such that
    $b \adj_{c_2} p$
    (as in Figure~\ref{fig:degrees135c1}). 
    Then the function $f: X \to X$ defined by
    \[ f(x) = \left \{ \begin{array}{ll}
        b & \mbox{if } x=p;  \\
        x & \mbox{if } x \neq p
    \end{array}
    \right .
    \]
        belongs to $C(X,c_1)$ (note, as shown
        in Figure~\ref{fig:degrees135c1}.
        $p \not \adj_{c_1} a$). Also,
    $\Fix(f) = X \setminus \{p\}$. Thus,
    $A \setminus \{p\}$ is not a freezing set
    for $(X,c_1)$.
\end{itemize}
Thus we have shown that if $p \in A_1$ then
$A \setminus \{p\}$ is not a freezing set
for $(X,c_1)$.

Now we wish to show if $p \in A_2$ then
$A \setminus \{p\}$ is not a freezing set
for $(X,c_1)$. Let $s$ be a slanted segment of $Bd(X)$
containing $p$.

If $p$ is not an endpoint of $s$, then
from the assumption~(\ref{slantSegProp})
there exist $b,c,d \in X$ such that 
$p \adj_{c_2} c$, $p \not \adj_{c_1} c$, and
$b \adj_{c_1} c \adj_{c_1} d$ 
(see Figure~\ref{fig:innerBdPt}). Then the function
    $f: X \to X$ defined by
    \[ f(x) = \left \{ \begin{array}{ll}
        c & \mbox{if } x=p;  \\
        x & \mbox{if } x \neq p
    \end{array}
    \right .
    \]
    belongs to $C(X,c_1)$. Also,
    $\Fix(f) = X \setminus \{p\}$. Thus,
    $A \setminus \{p\}$ is not a freezing set
    for $(X,c_1)$.

If $p$ is an endpoint of $s$, let $s'$ be the
other maximal segment of $Bd(X)$ for which $p$ is
an endpoint. If $s'$ is horizontal or vertical,
then $p \in A_1$, hence, as discussed above,
$A \setminus \{p\}$ is not a freezing set for $(X,c_1)$.
Therefore, we assume $s'$ is slanted.
Since $X$ is convex and both $s$ and $s'$ are slanted,
the interior angle of $S$
at $p$ must be $90^\circ$ ($\pi/2$ radians). There
exists $q \in Int(X)$ such that $q \adj_{c_1} p$
(see Figure~\ref{fig:degrees90b}).
   \begin{figure}
        \centering
        \includegraphics[height=2in]{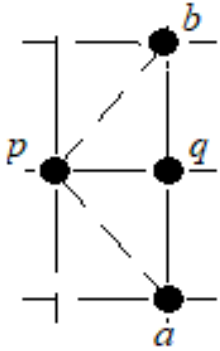}
        \caption{$\angle apb$ is a
        $90^\circ$ ($\pi/2$ radians) angle
         between slanted segments of a bounding curve, with
         $q \in Int(X)$. (Not meant to
        be understood as showing all of $X$).}
        \label{fig:degrees90b}
    \end{figure}
Then the function
    $f: X \to X$ defined by
    \[ f(x) = \left \{ \begin{array}{ll}
        q & \mbox{if } x=p;  \\
        x & \mbox{if } x \neq p
    \end{array}
    \right .
    \]
    belongs to $C(X,c_1)$. Also,
    $\Fix(f) = X\setminus \{p\}$. Thus,
    $A \setminus \{p\}$ is not a freezing set
    for $(X,c_1)$.
\end{proof}

\section{$c_2$-Freezing sets for disks in $\Z^2$}
For disks in $\Z^2$, we obtain results
for the $c_2$ adjacency
that are dual to those obtained for the $c_1$ 
adjacency in the previous section.

As was true of the $c_1$ adjacency and Theorem~\ref{convexDiskThm}, we see, by
comparing Example~\ref{nonConvC2Exl} and
Theorem~\ref{convDiskThmC2Actual} below, that
with $c_2$ adjacency, 
convexity can affect determination of a minimal
freezing set for a digital image in~$\Z^2$.

\begin{exl}
\label{nonConvC2Exl}
Let $D = [0,3]_{\Z} \times [0,6]_{\Z}
         \setminus \{(3,3)\})$.
(This is the set used in 
Example~\ref{axesParallelCounterexl}.
See Figure~\ref{fig:nonConvHorzVertBd}.)
Let
\[ B = Bd(D) \setminus \{(2,3)\}.
\]
Then $B$ is a minimal freezing set for $(D,c_2)$.
\end{exl}

\begin{proof}
Let $f \in C(D,c_2)$ be such that 
\begin{equation}
\label{fFixesB}
f|_B = \id_B.
\end{equation}
Let $p=(2,3)$, $q=(3,2) \in B$, 
$s=(3,4) \in B$. Note the following:
\begin{itemize}
    \item If $p_1(f(p)) > p_1(p)$ then by Lemma~\ref{cuPulling},
          $p_1(f(1,3) > 1$ and therefore
          $p_1(f(0,3)) > 0$, contrary to~(\ref{fFixesB}).
    \item If $p_1(f(p)) < p_1(p)$ then by Lemma~\ref{cuPulling},
          $p_1(f(q)) < 3$, contrary to~(\ref{fFixesB}).
    \item If $p_2(f(p)) > p_2(p)$ then by Lemma~\ref{cuPulling},
          $p_2(f(q)) > 2$, contrary to~(\ref{fFixesB}).
    \item If $p_2(f(p)) < p_2(p)$ then by Lemma~\ref{cuPulling},
          $p_1(f(s)) < 4$, contrary to~(\ref{fFixesB}).
\end{itemize}
It follows that
$p \in \Fix(f)$. Since $B \cup \{p\} = Bd(D)$,
it follows from Theorem~\ref{bdFreezes} that
$Bd(D) \subset \Fix(f)$. By Theorem~\ref{bdFreezes},
$f = \id_D$. This establishes that $B$ is a
freezing set.

To show $B$ is minimal, for
$b \in B$ let $f_b: D \to D$ be the function
\[ f_b(x) = \left \{ \begin{array}{ll}
   (1,1)  & \mbox{if } x= b=(0,0);  \\
   (i, 1) & \mbox{if } x=b=(i,0) \mbox{ for } i \in \{1,2\}; \\
   (1, j) & \mbox{if } x=b= (0,j) \mbox{ for } 1 \le j \le 5; \\
   (1,5) & \mbox{if } x=b= (0,6); \\
   (i,5) & \mbox{if } x=b= (i,6) \mbox{ for } i \in \{1,2\}; \\
   (2,5) & \mbox{if } x=b= (3,6); \\
   (2,j) & \mbox{if } x=b= (3,j) \mbox{ for } j \in \{1,2,4,5\};\\
   x & \mbox{if } x \neq b.
   \end{array}
\right .
\]
Then $f_b \in C(D,c_2)$ (this is easily seen from
Figure~\ref{fig:nonConvHorzVertBd}), 
and $\Fix(f_b) = D \setminus \{b\}$. Therefore, $B \setminus \{b\}$
is not a freezing set for $(D,c_2)$. The assertion
follows.
\end{proof}

\begin{thm}
\label{convexDiskThmC2}
Let $X$ be a finite digital image in~$\Z^2$ such that
$Bd(X)=\bigcup_{i=1}^n S_i$ is the disjoint union of 
$c_2$-closed curves $S_i$. Let $B_1$ be the set of points $x \in Bd(X)$ such that
$x$ is an endpoint of a maximal
slanted edge in $Bd(X)$. Let $B_2$ 
be the union of maximal horizontal 
and maximal vertical line segments in $Bd(X)$.
Let $B = B_1 \cup B_2$. Then
$B$ is a freezing set for $(X,c_2)$.
\end{thm}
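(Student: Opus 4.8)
The plan is to mirror the proof of Theorem~\ref{convexDiskThm}, interchanging the roles played by slanted and axis-parallel segments. The governing principle is that under $c_2$-adjacency a slanted segment is the \emph{unique} shortest path between its endpoints, whereas a horizontal or vertical segment is not; this is exactly the reverse of the situation for $c_1$, and it explains why $B_1$ records only the endpoints of slanted edges while $B_2$ records every point of the axis-parallel edges. I would set up by fixing an arbitrary $f \in C(X,c_2)$ with $B \subset \Fix(f)$ and aim to show that all of $Bd(X)$ lies in $\Fix(f)$, after which Theorem~\ref{bdFreezes} finishes the argument.

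First I would handle each maximal slanted edge $s$ of $Bd(X)$. Its two endpoints lie in $B_1 \subset \Fix(f)$, so once I verify that $s$ is the unique shortest $c_2$-path in $X$ joining them, Proposition~\ref{uniqueShortestProp} forces $s \subset \Fix(f)$. For the maximal horizontal and vertical edges there is nothing to prove, since by definition $B_2 \subset B$ already contains every one of their points. By the earlier remark a digital segment is horizontal, vertical, or slanted, so each $c_2$-closed curve $S_i$ is the concatenation of finitely many such maximal edges meeting at their shared endpoints; the two facts above therefore give $S_i \subset \Fix(f)$, and taking the union over $i$ yields $Bd(X) = \bigcup_{i=1}^n S_i \subset \Fix(f)$.

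The heart of the argument, and the step I expect to require the most care, is the uniqueness claim for slanted edges. A maximal slanted edge runs, say, from $(a,b)$ to $(a+k,b+k)$ through the points $(a+j,b+j)$, the slope $-1$ case being symmetric. The $c_2$-distance between the endpoints is the Chebyshev distance $\max(k,k)=k$, and a single $c_2$-step alters each coordinate by at most $1$; to accumulate a net displacement of $(k,k)$ in exactly $k$ steps every step must be the diagonal move $(+1,+1)$, so the diagonal is the only shortest $c_2$-path. By contrast a horizontal edge from $(a,b)$ to $(a+k,b)$ admits several shortest $c_2$-paths (for instance one may leave the line and return via two diagonal steps, as in $(0,0)\adj_{c_2}(1,1)\adj_{c_2}(2,0)$), which is precisely why Proposition~\ref{uniqueShortestProp} does not apply to axis-parallel edges and why $B_2$ must list all of their points rather than just endpoints.

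Finally, having established $Bd(X) \subset \Fix(f)$, I would invoke Theorem~\ref{bdFreezes} with $u=2$ to conclude $f = \id_X$. Since $f$ was an arbitrary continuous self-map fixing $B$, this shows $B$ is a freezing set for $(X,c_2)$, as required. The remaining bookkeeping---that the maximal edges genuinely cover each $S_i$ and that their shared endpoints are accounted for---is routine and parallels the $c_1$ proof, so the only genuinely new content is the Chebyshev-distance uniqueness computation sketched above.
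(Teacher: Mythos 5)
Your proof is correct, but at the key step it takes a genuinely different route from the paper. Both arguments share the same frame: fix $f \in C(X,c_2)$ with $B \subset \Fix(f)$, show every maximal slanted edge of $Bd(X)$ lies in $\Fix(f)$ (the axis-parallel edges being in $\Fix(f)$ by hypothesis since $B_2$ contains all their points), conclude $Bd(X) \subset \Fix(f)$, and finish with Theorem~\ref{bdFreezes}. Where you differ is in how the interior points of a slanted edge get fixed. The paper invokes Lemma~\ref{cuPulling}: if an interior point $p$ of a slanted edge moved, the pulling lemma (iterated along the edge) would force one of the edge's endpoints to move, contradicting $B_1 \subset \Fix(f)$. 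You instead verify that a slanted edge from $(a,b)$ to $(a+k,b+k)$ is the \emph{unique} shortest $c_2$-path in $X$ between its endpoints --- the Chebyshev distance is $k$ and a net displacement of $(k,k)$ in $k$ steps forces every step to be diagonal --- and then apply Proposition~\ref{uniqueShortestProp}. Your uniqueness computation is sound, and it is exactly the dual of how the paper's own proof of Theorem~\ref{convexDiskThm} handles horizontal and vertical edges under $c_1$, so your version makes the $c_1$/$c_2$ duality more transparent and replaces an iterated application of the pulling lemma with a single self-contained counting argument. The trade-off is negligible; both tools are already established in the paper, and neither route is more general than the other here.
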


\begin{proof}
Let $f \in C(X,c_2)$ such that $f|_B = \id_B$.

Let $p$ be a point of a slanted edge $E$ of
$Bd(X)$ such that $p \not \in B_1$.
Let $s$ and $s'$ be the endpoints of $E$.
If $f(p) \neq p$, it follows from
Lemma~\ref{cuPulling} that either $f(s) \neq s$
or $f(s') \neq s'$, a contradiction since by
hypothesis we have $\{s,s'\} \subset \Fix(f)$.
Therefore, $p \in \Fix(f)$; hence, every
slanted edge of $Bd(X)$ is a subset of $\Fix(f)$.
Since by hypothesis all horizontal and vertical edges
of $Bd(X)$ belong to $\Fix(f)$, we conclude that
$Bd(X) \subset \Fix(f)$. It follows from
Theorem~\ref{bdFreezes} that $f = \id_X$. Thus,
$B$ is a freezing set for $(X,c_2)$.
\end{proof}

\begin{thm}
\label{convDiskThmC2Actual}
Let $X$ be a thick convex disk with a  bounding
curve $S$. Let $B_1$ be the set of
points $x \in S$ such that
$x$ is an endpoint of a maximal
slanted edge in $S$. Let $B_2$ 
be the union of maximal horizontal 
and maximal vertical line segments in $S$.
Let $B = B_1 \cup B_2$. Then $B$ is a 
minimal freezing set for $(X,c_2)$
(see Figure~\ref{fig:convexRlts}(iii)).
\end{thm}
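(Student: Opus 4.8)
The plan is to prove the two assertions—that $B$ is a freezing set and that it is minimal—separately, mirroring the proof of Theorem~\ref{convDiskThmActual} with the roles of slanted and axis-parallel edges interchanged. For the freezing half I would take $f \in C(X,c_2)$ with $f|_B = \id_B$ and show $S \subseteq \Fix(f)$ exactly as in Theorem~\ref{convexDiskThmC2}. Every maximal horizontal or vertical edge lies in $B_2 \subseteq \Fix(f)$ by hypothesis. For a slanted edge $E$ with endpoints $s,s' \in B_1$, were some interior point $p$ of $E$ to satisfy $f(p) \neq p$, then $f$ would displace $p$ in some coordinate direction, and Lemma~\ref{cuPulling} would propagate that displacement along $E$ until it forced $f(s) \neq s$ or $f(s') \neq s'$, contradicting $\{s,s'\} \subseteq B$. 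Hence every slanted edge also lies in $\Fix(f)$, so $S \subseteq \Fix(f)$, and Theorem~\ref{bdCurveFreezeSet} yields $f = \id_X$. This half uses neither convexity nor thickness.

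For minimality I would produce, for each $p \in B$, a map $f_p \in C(X,c_2)$ with $\Fix(f_p) = X \setminus \{p\}$; then $B \setminus \{p\} \subseteq \Fix(f_p)$ while $f_p \neq \id_X$, so $B \setminus \{p\}$ is not a freezing set. The guiding observation is that a self-map fixing every point except $p$ and sending $p \mapsto p'$ is $c_2$-continuous exactly when $p' \in X$, $p' \neq p$, and every $c_2$-neighbor of $p$ in $X$ lies in the closed $3 \times 3$ box centered at $p'$; so in each case it suffices to exhibit one admissible target $p'$. I would split on the position of $p$, noting that each point of $B$ is either a relative-interior point of a maximal horizontal or vertical edge or a vertex of $S$. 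If $p$ lies in the relative interior of a maximal horizontal or vertical edge, I take $p'$ one step perpendicularly inward; this $p'$ lies in $X$ because, $X$ being convex, no point of $X$ sits on the outward side of the edge, and the box condition holds since the neighbors of $p$ all lie on the edge or on its inward side. If instead $p$ is a vertex of $S$, convexity forces the interior angle there to be $45^\circ$, $90^\circ$, or $135^\circ$, and I argue by angle.

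For the angle subcases I would take: at a $45^\circ$ vertex, $p'$ an edge-neighbor of $p$, which works because the wedge is so sharp that $p$ has only its two edge-neighbors in $X$; at a $90^\circ$ vertex, $p'$ one step along an edge for an axis-parallel corner (legitimate for $c_2$ since the diagonal adjacency lets the target reach the other edge's neighbor), or $p'$ one step perpendicularly inward for a peak of two slanted edges, where convexity places $p'$ in $X$ as the midpoint of the two slant-neighbors; and at a $135^\circ$ vertex, $p' = b'$, the $c_1$-adjacent interior point supplied by thickness (Definition~\ref{thickness}). I expect the $135^\circ$ case to be the main obstacle, as it is the only case genuinely invoking thickness and it requires checking that $b'$ is $c_2$-adjacent to all the relevant neighbors of $p$—the slanted edge-neighbor, the axis-parallel edge-neighbor, and the interior points $b$ and $b'$—which means tracking the precise configuration of the $135^\circ$ wedge in each of its orientations. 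The remaining work is the routine verification in every case that the chosen $p'$ lies in $X$ and that its $3 \times 3$ box captures all $c_2$-neighbors of $p$, the only essential inputs being convexity, to locate the inward targets in $X$, and thickness, for the $135^\circ$ vertices.
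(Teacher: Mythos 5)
Your proposal is correct and follows essentially the same route as the paper: the freezing half is the Lemma~\ref{cuPulling} argument of Theorem~\ref{convexDiskThmC2} followed by the bounding-curve freezing theorem, and minimality is established by the same one-point displacement maps with the same case split on interior angles, with thickness invoked only at the $135^\circ$ vertices. The only (harmless) divergence is at an axis-parallel $90^\circ$ corner, where you send $p$ to an edge-neighbor rather than to the diagonal interior point $q$ used in the paper; both targets satisfy the $c_2$-adjacency check.
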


\begin{proof}
That $B$ is a freezing set follows as in
the proof of Theorem~\ref{convexDiskThmC2}. To show
$B$ is a minimal freezing set, we must show that
$B \setminus \{p\}$ is not a freezing set 
for every $p \in B$.

We start with $p \in B_1$.
Since $X$ is a convex disk, we only have the following 
possibilities to consider.
\begin{itemize}
       \item $X$ has an interior angle $\theta$ at $p$ of
          45$^\circ$ ($\pi / 4$ radians). Let
          $a \in X$ be such that $a \adj_{c_2} p$ and
          $a$ is adjacent to $p$ on an edge of $\theta$
          (see Figure~\ref{fig:degrees45}). Then 
          the function $f: X \to X$ given by
          \[ f(x) = \left \{ \begin{array}{ll}
              x & \mbox{if } x \neq p;  \\
              a & \mbox{if } x = p,
          \end{array}
          \right .
          \]
          belongs to $C(X,c_2)$, with 
          $X \setminus \{p\} = \Fix(f)$.
          Thus $B \setminus \{p\}$ is not
          a freezing set for $(X,c_2)$.
      \item $X$ has an interior angle at $p$ of
          90$^\circ$ ($\pi / 2$ radians). Then, there is a
          point $q \in Int(X)$ such that $p \adj_{c_1} q$
          as in Figure~\ref{fig:degrees90b},
          and the function $f: X \to X$ given by
          \[ f(x) = \left \{ \begin{array}{ll}
              x & \mbox{if } x \neq p;  \\
              q & \mbox{if } x = p,
          \end{array}
          \right .
          \]
          belongs to $C(X,c_2)$, with 
          $X \setminus \{p\} = \Fix(f)$.
          Thus $B \setminus \{p\}$ is not
          a freezing set for $(X,c_2)$.
         \item $X$ has an interior angle at $p$ of
          135$^\circ$ ($3 \pi / 4$ radians). Since $X$ is thick,
          there are points $a,b,b', q,q'$ as in 
          Figure~\ref{fig:degrees135c1}, i.e., $a$ and $q$
          are $c_2$-adjacent to $p$ along sides of the
          interior angle, such that 
          \[ N(X,p,c_2) = \{a,b,q,q'\} \subset
             N^*(X,b,c_2),
          \]
          and $\{a,b,p,q\} \subset N(X,c_2,b')$.
          Therefore, the function
          $f: X \to X$ given by
          \[ f(x) = \left \{ \begin{array}{ll}
              x & \mbox{if } x \neq p; \\
              b' & \mbox{if } x = p,
          \end{array}
          \right .
          \]
          belongs to $C(X,c_2)$, with
          $X \setminus \{p\} = \Fix(f)$. 
          Thus $B \setminus \{p\}$ is not
          a freezing set for $(X,c_2)$.
\end{itemize}

Now consider $p$ as a member of $B_2$. Since $X$ is convex,
this leaves only the following possibilities.
\begin{itemize}
       \item $X$ has an interior angle at $p$ of
          45$^\circ$ ($\pi / 4$ radians). Then
          $p \in B_1 \cap B_2 \subset B_1$. As discussed above,
          $B \setminus \{p\}$ is not a freezing set for $(X,c_2)$.
      \item $X$ has an interior angle at $p$ of
          90$^\circ$ ($\pi / 2$ radians). Let
          $a$ and $b$ be the points of the horizontal
          and vertical segments
          of $Bd(X)$ such that 
          $a \adj_{c_1} p  \adj_{c_1} b$ and let
          $q \in Int(X)$ be the point such that
          $a \adj_{c_1} q  \adj_{c_1} b$
          (see Figure~\ref{fig:degrees90a}). Then
          the function $f: X \to X$ defined by
          \[ f(x) = \left \{ \begin{array}{ll}
              x & \mbox{if } x \neq p; \\
              q & \mbox{if } x = p,
          \end{array}
          \right .
          \]
          is in $C(X,c_2)$ and 
          $\Fix(f) = X \setminus \{p\}$. So
          a freezing set for $(X,c_2)$ must 
contain $p$.
    \item $X$ has an interior angle at $p$ of
          135$^\circ$ ($3 \pi / 2$ radians). Then
          $p \in B_1 \cap B_2 \subset B_1$.
          As shown above, $B \setminus \{p\}$ is not
          a freezing set for $(X,c_2)$.
    \item $p$ is not an endpoint of its segment
          of $Bd(X)$. Then $p$ has a $c_1$-neighbor
          $q \in X$ 
          (see Figure~\ref{fig:notEndpt}).
       \begin{figure}
        \centering
        \includegraphics[height=2in]{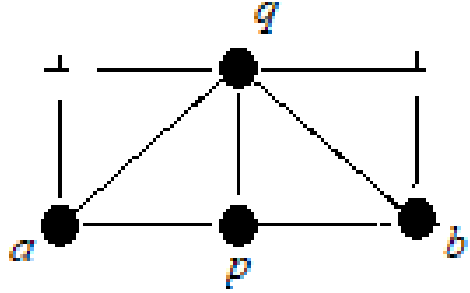}
        \caption{$p \in \overline{ab}$, a
        segment of the bounding curve $S$.
        $q \in Int(X)$.
           $p \adj_{c_1} q$.
            (Not meant to
        be understood as showing all of $X$.)
        }
        \label{fig:notEndpt}
        \end{figure}
          Then 
          the function $f: X \to X$ defined by
          \[ f(x) = \left \{ \begin{array}{ll}
              x & \mbox{if } x \neq p; \\
              b & \mbox{if } x = p,
          \end{array}
          \right .
          \]
          is in $C(X,c_2)$ and 
          $\Fix(f) = X \setminus \{p\}$. Hence
          $B \setminus \{p\}$ is not a freezing set for
          $(X,c_2)$.
 \end{itemize}

We have shown that for all $p \in B$,
$B \setminus \{p\}$ is not a freezing set for
$(X,c_2)$. Therefore, $B$ is a minimal
freezing set for $(X,c_2)$.
\end{proof}

\section{Further remarks}
Let $X$ be a thick convex digital disk 
in $\Z^2$ .
We have shown how to find minimal freezing sets
for $(X,c_1)$ and for $(X,c_2)$. 
We have given examples showing that our assertions
do not extend to non-convex disks in~$\Z^2$. However,
for non-convex disks in~$\Z^2$ we have shown how
to obtain smaller freezing sets than were
previously known.

We have left unanswered the following.

\begin{question}
Is every convex disk in $\Z^2$ thick?
\end{question}

\end{document}